\newcommand{\excise}[1]{}%{$\star$\textsc{#1}$\star$}
\newtheorem{theorem}{Theorem}[section]
\newtheorem{lemma}[theorem]{Lemma}
\newtheorem{corollary}[theorem]{Corollary}
\newtheorem{proposition}[theorem]{Proposition}
\newtheorem{maintheorem}{Main Theorem}
\theoremstyle{definition}
\newtheorem{example}[theorem]{Example}
\newtheorem{remark}[theorem]{Remark}
\newtheorem{definition}[theorem]{Definition}
\renewcommand\>{\rangle}
\newcommand\CC{\mathbb{C}}
\newcommand\NN{\mathbb{N}}
\newcommand\QQ{\mathbb{Q}}
\newcommand\RR{\mathbb{R}}
\newcommand\ZZ{\mathbb{Z}}
\DeclareMathOperator\convhull{conv} % convex hull
\DeclareMathOperator\aff{aff}
\DeclareMathOperator\aw{aw}
\DeclareMathOperator\AR{AR}
\DeclareMathOperator\gaps{gaps}
\title[An arithmetic measure of width for convex bodies]{An arithmetic measure of width for convex bodies}
\begin{document}

\author[De Loera]{Jes\'us A. De Loera}
\address{Mathematics Department\\University of California Davis\\Davis, CA 95616}
\email{deloera@math.ucdavis.edu}

\author[Marsters]{Brittney Marsters}
\address{Mathematics Department\\University of California Davis\\Davis, CA 95616}
\email{bmmarsters@ucdavis.edu}

\author[O'Neill]{Christopher O'Neill}
\address{Mathematics Department\\San Diego State University\\San Diego, CA 92182}
\email{cdoneill@sdsu.edu}

\makeatletter
\@namedef{subjclassname@2020}{\textup{2020} Mathematics Subject Classification}
\makeatother
\subjclass[2020]{11H06,52B20,52C07}
% 13D02 Commutative algebra - Syzygies, resolutions, complexes and commutative rings
% 20M14 Group theory and generalizations - Commutative semigroups
% 52B05 Convex and discrete geometry - Combinatorial properties of polytopes and polyhedra (number of faces, shortest paths, etc.)
% 13F65 Commutative rings defined by binomial ideals, toric rings, etc. [See also 14M25]
% 05E40 Combinatorial aspects of commutative algebra
% 13F20 Polynomial rings and ideals; rings of integer-valued polynomials

\keywords{convex body, lattices, width, arithmetic progressions, additive combinatorics, geometry of numbers, numerical semigroups}

\date{\today}

\begin{abstract}

We introduce the arithmetic width of a convex body, defined as the number of distinct values a linear functional attains on the lattice points within the body. Arithmetic width refines lattice width by detecting gaps in the lattice point distribution and always provides a natural lower bound. We show that for large dilates of a convex body, the attained values form an arithmetic progression with only a bounded number of omissions near the extremes. For rational polytopes, we show that the arithmetic width grows eventually quasilinearly in the dilation parameter, with optimal directions reoccurring periodically. Lastly, we present algorithms to compute the arithmetic width. These results build new connections with discrete geometry, integer programming, and additive combinatorics.  
\end{abstract}

\maketitle
\vspace{5mm} % I needed to remove the authors without messing up the overall spacing so this was my best solution :)

\section{Introduction}
Convex bodies admit several different width measurements, most notably the Euclidean and lattice (or integer) widths. These invariants have proven useful in many areas of mathematics. In integer optimization, lattice width has been instrumental in the development of algorithms, such as Lenstra's algorithm, shortest vector computation, and other lattice algorithms \cite{lenstra1983integer,nguyen2011lllsurvey}. Moreover, in computer graphics, lattice width is used to describe straightness in digital spaces \cite{Feschet_digitalspaces}.  In convex geometry, lattice width appears in Borsuk‐type partitioning and lattice‐based sphere‐packing and covering problems \cite{barvinok2002course,lovasz1989geometry}. 

Our paper proposes a new alternative notion of width for convex bodies, called the \emph{arithmetic width}, that seeks to address limitations of lattice width and provides alternative lower bounds for estimation. Before we introduce our new definition, let us briefly recall established notions of width. The lattice width of a convex body $K\subseteq\RR^d$ is
\[
w(K)=\displaystyle\min_{c\in \ZZ^d\setminus\{0\}} w_c(K),
\qquad \text{where} \qquad
w_c(K) = \max_{x,y\in K}c^T(x-y).
\]
Geometrically, the lattice width measures the minimum of the Euclidean distance between two hyperplanes with normal $c\in\ZZ^d$, scaled by the length $||c||$ of that integer vector. In~some sense, the lattice width measures the thinness of the convex body in integer directions. In~Figure~\eqref{fig: lattice width examples: nonintegral}, each of the dashed lines contain a lattice point of the polytope. However, the solid line contains no lattice points. If $K$ ``slips through'' the integer lattice -- i.e.,\ if there are gaps between lattice-supporting hyperplanes that intersect $\ZZ^d \cap K$ -- then $w(K)$ remains blind to those gaps. In~contrast, the arithmetic width is a finer invariant than the integer or Euclidean widths as it accounts for these missing lattice points in our polytope in fixed directions.

\begin{figure}[t]
    \centering
    \begin{subfigure}[t]{0.40\textwidth}
        \includegraphics[width=\linewidth]{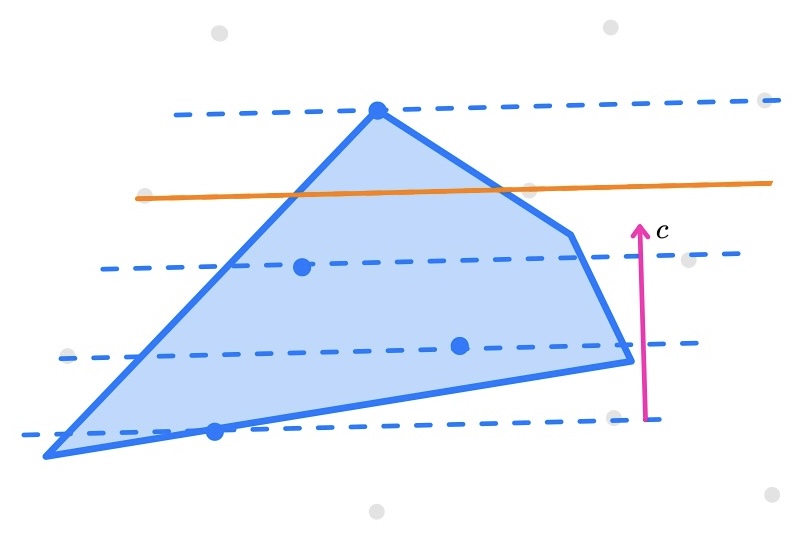}
        \caption{}
        \label{fig: lattice width examples: nonintegral}
    \end{subfigure}
    \hspace{0.02\textwidth}
    \begin{subfigure}[t]{0.40\textwidth}
    \includegraphics[width=\linewidth]{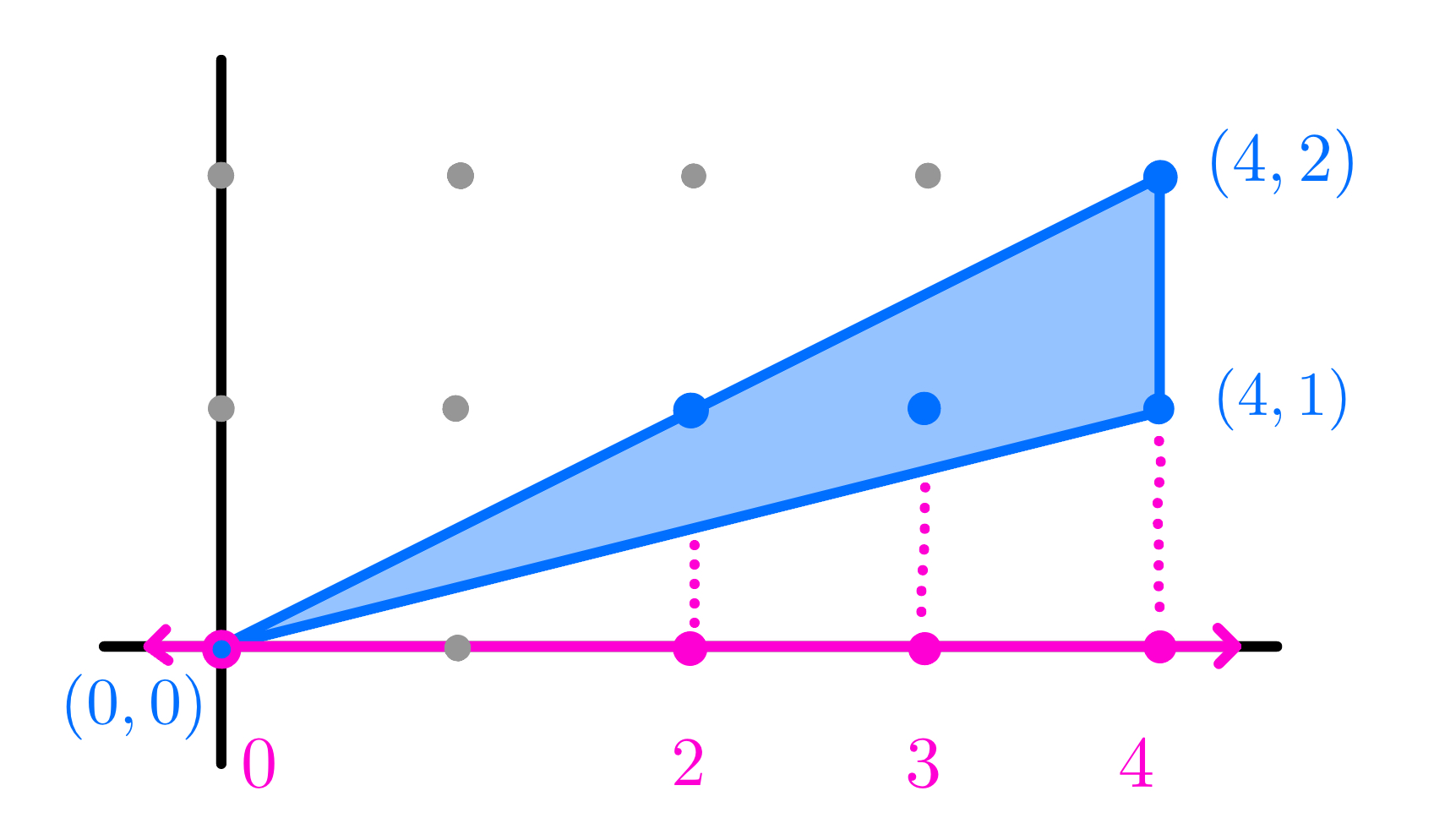}
        \caption{}
        \label{fig: lattice width examples: integral}
    \end{subfigure}
    \caption{A convex body $K$ depicted with a hyperplane contributing to lattice width but containing no lattice points in $K$ (left), and the polytope~$P$ in Example~\ref{ex:gaps} (right).}
    \label{fig: lattice width examples}
\end{figure}

\begin{definition}
    Fix a nonzero $c \in \ZZ^d$. The \emph{hyperplane cover of $K\cap\ZZ^d$ in direction $c$} is
    \[h_c(K) = \{H : H\hspace{1mm}\text{is a hyperplane with normal}\hspace{1mm} c,\hspace{1mm} H\cap K\cap\ZZ^d\neq\emptyset\}.\] The \emph{arithmetic width of $K$ in direction $c$} is $\aw_c(K)= |h_c(K)|$. 
\end{definition}

Lattice and arithmetic width satisfy the following inequality: \[\aw_c(K)=|h_c(K)|\leq w_c(K) + 1.\]
Whenever there are gaps, the arithmetic width and lattice width must differ.

Few results are know on lower bounds for lattice width, which makes arithmetic width a valuable new tool in the study of widths \cite{khintchine1948,Mayrhofer_et_al}. Moreover, it is clear that $\aw_c(K)=0$ precisely when there are no lattice points inside $K$. In contrast,  Khinchin’s celebrated flatness theorem~\cite{khintchine1948} guarantees that for every dimension $d$ there exists a positive constant which upper-bounds the lattice width of convex bodies which have no lattice points inside. The~exact value of the so called \emph{flatness constant} is generally not known, even in dimension $d=3$ (see~\cite{codenotti2021generalisedflatnessconstantsframework,ReisRothvoss2023} and references therein), but it is conjectured to be roughly proportional to~$d$.

Lattice and arithmetic widths also differ in ways beyond their sensitivity to empty slices of the convex body. While it is clear that the optimal values of lattice and arithmetic widths can differ, we will see in Example~\ref{ex: aw_vs_lw} that even the optimal directions for these notions of width can differ. Distinguishing properties of arithmetic width and lattice width are summarized in Table~\ref{tab:width_comparison}.

There are other width-adjacent notions with clear connections to the arithmetic width, namely the layer-number and $k$-level of a lattice polytope. In \cite{DezaOnn1995}, while investigating diameters of graphs of lattice polytopes, Deza and Onn introduced the \emph{layer-number in direction~$c$} as the number of distinct values the linear functional $c^Tx$ takes over the vertices of a lattice polytope. They defined the layer-number of $P$ as the minimum over all non-zero directions and showed that there is a constant~$C$ that bounds the layer number of any $d$-dimensional convex polytope by $Cd^2$. As the layer-number only considers vertices in its count, the layer-number serves as a lower bound for the arithmetic width. Similarly, a lattice polytope $P$ is \emph{$k$-level} if every facet-defining linear function of $P$ takes at most $k$ distinct values on the vertices of the polytope. Thus, for any $k$-level polytope, the arithmetic width in facet defining directions is at most $k$. The importance of $k$-level polytopes stems from their connections to optimization. It is well-known that if P is $k$-level, then it admits a spectrahedral lift of polynomial size (for fixed $k$). Notably, being $k$-level is used to give efficient semidefinite programming representations for stable set polytopes of perfect graphs~\cite{Fawzietal2015}.

\begin{table}[t]
    \centering
    \begin{tabular}{ c|c } 
        Lattice Width & Arithmetic Width\\ \hline Real-valued & Integer-valued 
        \\
        Insensitive to gaps &  Sensitive to gaps:\ see Figure~\ref{fig: lattice width examples}
        \\
        Parallel vectors give different lattice widths & Parallel vectors give same arithmetic width
        \\
        Obtained by ``short'' directions of the lattice & The magnitude of the vector is irrelevant
        \\
        $|h_c(K)|\leq w_c(K)+1$ & $|h_c(K)| = aw_c(K)$
        \\
        For $\lambda>0$, $w(\lambda K)=\lambda w(K)$ &  $\aw{
        (\lambda K})$ quasiperiodic:\ see Theorem~\ref{mainthm: eql_non_fixed_directions}
    \end{tabular}
    \smallskip
    \caption{Differences between lattice width and arithmetic width}
    \label{tab:width_comparison}
\end{table}

A complementary perspective comes from additive combinatorics, via the notion of $P$-free sets, inspired by the classical Sidon problem of Erd\"os. %~\cite{Balogh28052023}.
For a given convex polyhedron $P$, one can ask what is the largest subset of integers~$A$ such that $P\cap (\ZZ^d\cap A^d)=\emptyset$? In other words, a subset $A\subseteq \{0,\dots, n\}$ is called $P$-free if no lattice point of $P$ has all its coordinates in $A$. This question generalizes classical questions in additive combinatorics related to Sidon and Salem-Spencer sets~\cite{Balogh28052023, salem-spencer-sets}. For example, consider the polyhedron $P\subseteq \RR^3$ defined by $x_1+x_3=2x_2$ and $x_1,x_2,x_3\geq 0$. In this case, a set $A$ is $P$-free if it contains no three-term arithmetic progression. This combinatorial perspective motivates the us to study which integer values appear as coordinates, informing possibilities for the $P$-free sets $A$. 

To make the above precise, we introduce the notion of the arithmetic range. 

\begin{definition}
    Let $K\subseteq \mathbb{R}^d$ be a convex body and let $\ell_c: \mathbb{Z}^d\to \mathbb{Z}$ be a linear functional given by $\ell_c(x)=c^Tx$. The \emph{arithmetic range} of $K$ with respect to the direction $\ell_c$ is 
    \[
    \AR_{c}(K)=\{n\in\mathbb{Z}: n=\ell_c(x) \text{ for some } x\in K\cap\mathbb{Z}^d\}.
    \] 
\end{definition}

\begin{example}
    \label{ex:gaps}
    Let $P=\convhull((0,0),(4,1),(4,2))\subset\RR^2$ as in Figure~\ref{fig: lattice width examples: integral}, and let $c=(0,1)$. Then, we have \[\AR_{c}(P)=\{0,2,3,4\}.\] In particular, notice that $1$ is not in the arithmetic range. 
\end{example}

We will soon prove (see Proposition~\ref{prop: aw_properties}\eqref{prop: aw_properties; equivalence}) that \[\aw_{c}(K):=|AR_c(K)|.\]

\subsection{Main results}
The main results of this paper comprise of four main theorems. The first theorem is a structure theorem for the arithmetic range:\ for sufficiently large dilates of a convex body, the arithmetic range has a highly regular structure, forming an ``almost arithmetic progression'' - an arithmetic progression except for finitely many exceptions near the extremes of the set, a notion which arises frequently in the settings of combinatorial algebra and number theory~\cite{nonuniquefactorization}. %The study of arithmetic progressions in dilates of convex bodies was originated by Freiman~\cite{freiman}. Other variations of this problem were popularized by Ruzsa~\cite{ruzsafreiman} and later by Tao~\cite{taofreiman}. The behavior in our variation of this problem mirrors the eventually quasipolynomial growth of both Hilbert functions of graded algebras and of Ehrhart functions of rational polytopes \cite{BrunsHerzogHilbert,MillerSturmfels,StanleyCCA}. In~particular, just as the eventual quasipolynomial behavior of the classical Hilbert function detects structural stabilization in graded pieces of high degree, the arithmetic ranges of sufficiently large dilates of a convex body also exhibit structural stabilization.

\begin{maintheorem}
\label{mainthm: AR structure}
    Let $K$ be a convex body in $\mathbb{R}^d$ and let $\ell_c:\ZZ^d\to\ZZ$ be a linear functional. 
    There exist constants $t$, $t'$, and $\lambda$ dependent only on $K$ such that, for all $n \gg 0$, the set $\AR_{c}(nK)\cap [nm+t, nM-t']$ is an arithmetic progression of step size $\lambda$ where $m=\min_{x\in K}\ell_c(x)$ and $M=\max_{x\in K}\ell_c(x).$
\end{maintheorem}

\begin{figure}[t]
    \centering
    \includegraphics[width=0.65\linewidth]{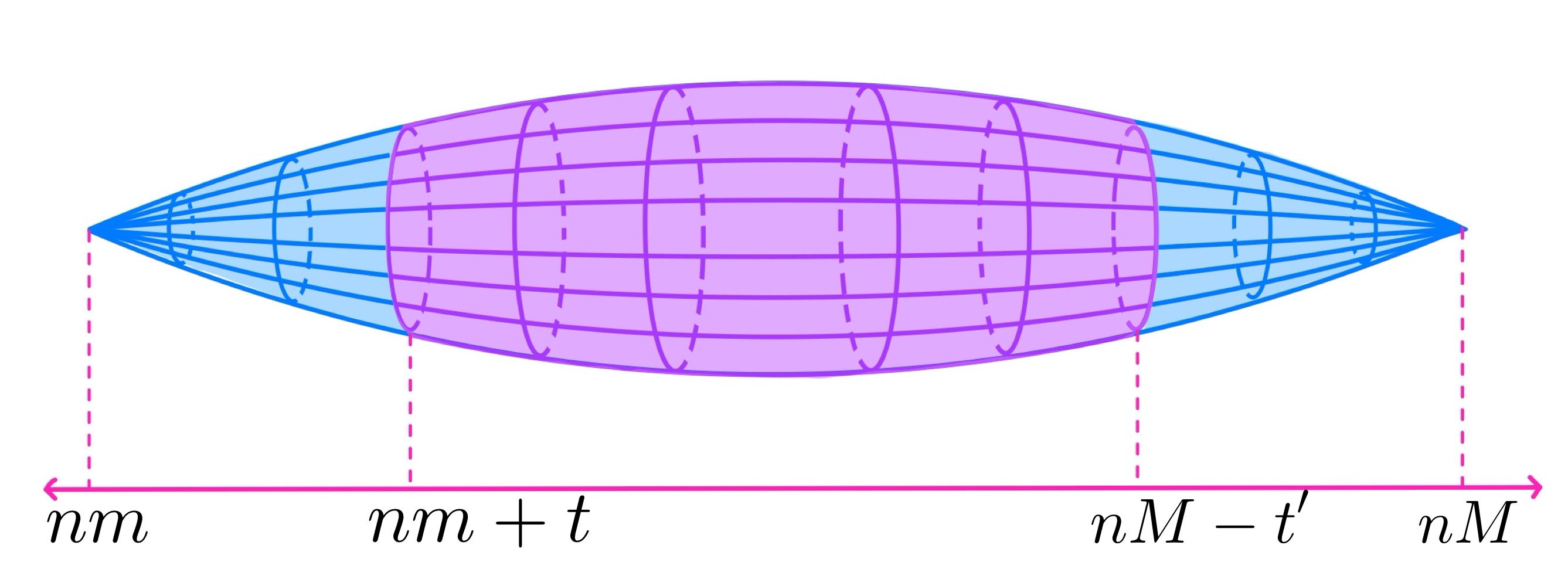}
    \caption{The constants $n$, $t$, and $t'$ from Main Theorem~\ref{mainthm: AR structure}.}
    \label{fig:AAP_constants}
\end{figure}

The study of arithmetic progressions in dilates of convex bodies was started by Freiman~\cite{freiman} and other variations of this problem were popularized by Ruzsa~\cite{ruzsafreiman} and later by Tao~\cite{taofreiman}. The behavior in our variation of this problem mirrors the eventually quasipolynomial growth of both Hilbert functions of graded algebras and of Ehrhart functions of rational polytopes \cite{BrunsHerzogHilbert,MillerSturmfels,StanleyCCA}. In~particular, just as the eventual quasipolynomial behavior of the classical Hilbert function detects structural stabilization in graded pieces of high degree, the arithmetic ranges of sufficiently large dilates of a convex body also exhibit structural stabilization.
Additionally, a number of results in additive combinatorics and factorization theory state that, under mild assumptions, sets of lengths of semigroup elements form almost arithmetic progressions~\cite{sets_of_length}.  Main Theorem~\ref{mainthm: AR structure} establishes an analogous property for arithmetic ranges of convex bodies in such a way that results concerning sets of lengths for numerical semigroups are obtained as a direct corollary (Remark~\ref{rmk: structure theorem for sets of length}). 

Having established structural properties for arithmetic width of dilates of a convex body, it is natural to ask how the arithmetic width grows as we dilate. 
In contrast, lattice width grows linearly in the dilation parameter. Our second result continues to mirror the behavior of Hilbert and Ehrhart functions:\ as a rational polytope is dilated, the size of the arithmetic range grows quasilinearly with finitely many exceptions. Remarkably, this behavior holds not only with a fixed linear functional, but persists when we minimize. 

\begin{maintheorem}
\label{mainthm: eql_non_fixed_directions}
    Let $P\subset\RR^d$ be a rational polytope. 
    \begin{enumerate}[(a)]
        \item 
        For a fixed linear functional $\ell_c:\ZZ^d \to \ZZ$, the arithmetic width $aw_{\ell}(nP)$ is an eventually quasilinear function of $n$ for $n\in\ZZ_{\geq 0}$.
        \item 
        The arithmetic width $aw(nP)$ is an eventually quasilinear function of $n$ for $n\in\ZZ_{\geq 0}$.
    \end{enumerate}
\end{maintheorem}

The previous two theorems describe the structural properties of the arithmetic ranges and the asymptotic behavior of the arithmetic width. From an algorithmic perspective, it is equally important to determine whether these quantities can be computed efficiently. Note that computing the lattice width is already $\mathsf{NP}$-hard. It is even hard to approximate; for instance, computing the lattice width of a centrally symmetric rational polytope from its vertex description can be reduced to computing the shortest vector in a lattice \cite{Aroraetal1997,vanEmdeBoas1981}. However, the problem is solvable in polynomial time in fixed dimension $d$; the statement ``$w(P)\le t$?'' can be reduced to a mixed-integer program with only $d$ integer variables and then solved efficiently using Lenstra's or Barvinok's algorithms~\cite{barvinokpommersheim,lenstra1983integer}. We remark that the computation of lattice width has been discussed or applied by many researchers; see, e.g., \cite{CharrierBuzerFeschet2009,dadush2012integer,ReisRothvoss2023,MicciancioGoldwasser2002} and the references therein. We next give analogous algorithmic results for computing the arithmetic width of rational polytopes. 

\begin{maintheorem} 
\label{mainthm:awu-polytime}
    Let $P\subset\RR^d$ be a rational polytope and let $L$ be the input bit-size of $P$.
    \begin{enumerate}[(a)]
        \item In fixed dimension, given a nonzero integer vector $c\in\ZZ^d\setminus\{0\}$, there is an algorithm to compute $\aw_c(P)$ that is polynomial time in~$L$ and the input bit-size of $c$.
        \item[(b)] There is a finite set of directions $T$ that serve as a sufficient test set to compute $\aw(P)$ (i.e., $\aw(P)=\aw_t(P)$ for some $t\in T$). Thus, in fixed dimension, there is an algorithm to compute the arithmetic width of $P$ that is single-exponential time in~$L$.
    \end{enumerate}
\end{maintheorem}

In Main Theorem~\ref{mainthm:two-examples} part~(a) we show another difference between the arithmetic and lattice widths:\ even the directions they attain their minimum values may differ.  
In contrast to Main Theorem~\ref{mainthm: AR structure}, which establishes a structural property for arithmetic ranges of large dilations of convex bodies, part~(b) of Main Theorem~\ref{mainthm:two-examples} demonstrates the opposite extreme:\ any finite set of integers can be realized as the arithmetic range of some simplex.

\begin{maintheorem}
\label{mainthm:two-examples}
The following hold.  
\text{ }
    \begin{enumerate}
        \item[(a)] The directions that obtain the minimum lattice width and the minimum arithmetic width can differ.
        \item[(b)] Any finite set of integers occurs as the arithmetic range of some rational simplex.
    \end{enumerate}
\end{maintheorem}

% \fromb{Ugly on page here}

%%%%%%%%%%%%%%%%%%%%%%%%%%%%%%%%%%%%%%%%%%%%%%%%%%%%%%%%%%%%%%%%%%%%%%
%%%%%%%%%%%%%%%%%%%%%%%%%%%%%%%%%%%%%%%%%%%%%%%%%%%%%%%%%%%%%%%%%%%%%%
%%%%%%%%%%%%%%%%%%%%%%%%%%%%%%%%%%%%%%%%%%%%%%%%%%%%%%%%%%%%%%%%%%%%%%

\section{Arithmetic width:\ properties and examples}

To provide intuition and necessary tools to prove our main results, we collect in this short section several basic properties of arithmetic width and give further examples. These results clarify how arithmetic width behaves under common geometric operations, demonstrate the variety of possible arithmetic ranges, and continue the discussion of the distinction between arithmetic width and lattice width. In all that follows, we denote $[n]=\{1,\dots, n\}\subseteq \ZZ_{>0}$.

\begin{proposition}
\label{prop: aw_properties}
Let $K\subset\RR^d$ be a convex body. 
\begin{enumerate}[(a)]
    \item \label{prop: aw_properties; translation}
    The arithmetic width $\aw_c(K)$ for a nonzero integer vector $c$ is invariant under integer translations of $K$. Specifically, if $t\in\ZZ^d$, then $\aw_c(K)= \aw_c(t+K)$.

    \item \label{prop: aw_properties; parallel}
    Let $c_1$ and $c_2$ be nonzero parallel integer vectors. Then, $\aw_{c_1}(K)=\aw_{c_2}(K)$.
    
    \item \label{prop: aw_properties; equivalence}
    For a nonzero integer direction $c$, 
    \[\aw_c(K):= |\AR_c(K)|= h_c(K).\]
    
    \item \label{prop: aw_properties; maxwidth}
    The maximal arithmetic width of $K$ is $|K\cap\ZZ^d|$. 
    %and is obtained in sufficiently generic directions.
\end{enumerate}
\end{proposition}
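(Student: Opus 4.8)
\textbf{Proof proposal for Proposition~\ref{prop: aw_properties}.}

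The plan is to handle the four parts in order, since (c) depends on understanding the hyperplane cover well, and (d) is a quick consequence of (c). For part~\eqref{prop: aw_properties; translation}, the key observation is that translation by an integer vector $t \in \ZZ^d$ maps $K \cap \ZZ^d$ bijectively onto $(t+K) \cap \ZZ^d$, and simultaneously maps each hyperplane $H$ with normal $c$ to the hyperplane $t + H$, which again has normal $c$. So the map $H \mapsto t+H$ restricts to a bijection $h_c(K) \to h_c(t+K)$: indeed $H \cap K \cap \ZZ^d \ne \emptyset$ if and only if $(t+H) \cap (t+K) \cap \ZZ^d \ne \emptyset$, because $t+(K \cap \ZZ^d) = (t+K) \cap \ZZ^d$. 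Counting gives $\aw_c(K) = \aw_c(t+K)$. (Alternatively, phrased via the arithmetic range: $\AR_c(t+K) = c^Tt + \AR_c(K)$, a translate of a set of integers by the integer $c^Tt$, hence of the same cardinality.)

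For part~\eqref{prop: aw_properties; parallel}, write $c_2 = \frac{p}{q} c_1$ with $c_1, c_2 \in \ZZ^d \setminus \{0\}$ parallel; after clearing denominators we may assume $c_2 = r c_1$ for a nonzero rational $r$, and in fact since both are integer vectors we can reduce to the case where $c_1$ is primitive and $c_2 = k c_1$ for some nonzero integer $k$ — but actually the cleanest route avoids case analysis entirely: the hyperplanes with normal $c_1$ are exactly the hyperplanes with normal $c_2$, since "having normal direction $c$" only depends on the line spanned by $c$, not its length or orientation. Thus $h_{c_1}(K)$ and $h_{c_2}(K)$ are literally the same set of hyperplanes (each is $\{H : H \text{ is a hyperplane perpendicular to the line } \RR c_1 = \RR c_2,\ H \cap K \cap \ZZ^d \ne \emptyset\}$), so $\aw_{c_1}(K) = |h_{c_1}(K)| = |h_{c_2}(K)| = \aw_{c_2}(K)$.

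For part~\eqref{prop: aw_properties; equivalence}, the heart of the matter is a bijection between $\AR_c(K)$ and $h_c(K)$. Given $x \in K \cap \ZZ^d$, the hyperplane through $x$ with normal $c$ is $H_n = \{y \in \RR^d : c^Ty = n\}$ where $n = c^Tx$; since $x$ is a lattice point in $K$, we have $H_n \in h_c(K)$. Conversely any hyperplane $H \in h_c(K)$ has normal $c$, so it is of the form $\{y : c^Ty = \alpha\}$ for a unique real $\alpha$, and since $H$ contains a lattice point $x$ of $K$ we get $\alpha = c^Tx \in \ZZ$, so $\alpha \in \AR_c(K)$ and $H = H_\alpha$. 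The maps $n \mapsto H_n$ and $H_\alpha \mapsto \alpha$ are mutually inverse bijections between $\AR_c(K)$ and $h_c(K)$, giving $|\AR_c(K)| = |h_c(K)| = \aw_c(K)$; this also justifies the definition $\aw_c(K) := |\AR_c(K)|$ used henceforth. Finally, for part~\eqref{prop: aw_properties; maxwidth}, note that the arithmetic width of $K$ is the maximum of $\aw_c(K) = |\AR_c(K)|$ over nonzero integer $c$; since $\AR_c(K)$ is the image of the finite set $K \cap \ZZ^d$ under the map $x \mapsto c^Tx$, we always have $|\AR_c(K)| \le |K \cap \ZZ^d|$, so $\aw(K) \le |K \cap \ZZ^d|$. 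For the reverse inequality it suffices to exhibit one direction $c$ on which $x \mapsto c^Tx$ is injective on the finite point set $K \cap \ZZ^d$; since finitely many pairwise-difference vectors $x - x'$ (for distinct $x, x' \in K \cap \ZZ^d$) each exclude only a hyperplane's worth of $c$, a generic integer $c$ avoids all of them, so such a $c$ exists and $\aw(K) = |K \cap \ZZ^d|$. The only mild subtlety here — the one place to be slightly careful — is the edge case $K \cap \ZZ^d = \emptyset$, where every $\AR_c(K) = \emptyset$ and the claimed maximum is $0 = |K \cap \ZZ^d|$, which holds vacuously; I expect this and the "generic $c$ exists" step to be the only points needing a sentence of justification rather than being immediate.
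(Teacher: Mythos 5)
Your proof is correct, and parts (a) and (c) follow essentially the same route as the paper (translation-invariance of lattice points and the bijection between values $n \in \AR_c(K)$ and hyperplanes $\{c^Ty = n\}$). The interesting divergences are in (b) and (d). For (b), the paper writes $c_1 = \lambda c_2$ with $\lambda \in \ZZ_{\neq 0}$ and scales the arithmetic range; strictly speaking that reduction is not available for arbitrary parallel integer vectors (e.g.\ $(2,0)$ and $(3,0)$, where neither is an integer multiple of the other), so the paper's argument implicitly needs a pass through the primitive vector. Your observation that $h_{c_1}(K)$ and $h_{c_2}(K)$ are \emph{literally the same set of hyperplanes} — since ``having normal $c$'' depends only on the line $\RR c$ — sidesteps this entirely and is the cleaner argument given that $\aw_c$ is defined via the hyperplane cover. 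For (d), you argue non-constructively that a generic integer $c$ separates all pairs of lattice points, since each of the finitely many conditions $c^T(x - x') = 0$ cuts out a proper subspace and $\ZZ^d$ is not covered by finitely many of these; the paper instead produces such a $c$ explicitly by evaluating the moment curve $g(z) = (1, z, \dots, z^{d-1})$ at an integer $z$ avoiding the at most $(d-1)\binom{n}{2}$ roots of the polynomials $g(z)^T(x-y)$, which additionally yields an effective bound on the size of the separating direction. Your version is fine as stated provided you add the one sentence you flagged (no finite union of proper subspaces covers $\ZZ^d$); the paper's version buys explicitness at the cost of introducing the moment curve. One cosmetic point: what you call $\aw(K)$ in part (d) is the \emph{maximum} of $\aw_c(K)$ over $c$, whereas elsewhere the unsubscripted width denotes the minimum over directions, so it would be safer to write $\max_c \aw_c(K)$ there.
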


\begin{proof}
Fix a nonzero integer vector $c$. As $|K\cap\ZZ^d|= |(K+t)\cap\ZZ^d|$ and that $\ell_c(x_1)=\ell_c(x_2)$ if and only if $\ell_c(x_1 +t)=\ell_c(x_2+t)$ for $x_1,x_2\in K\cap\ZZ^d$, we have claim~(a): \[\aw_c(K)= |\AR_c(K)| = |\AR_c(K+t)| =  \aw_c(K+t).\]

Now fix two non-zero, parallel, integer vectors $c_1$ and $c_2$. Without loss of generality, let $c_1> c_2$, thus there is some $\lambda\in\ZZ_{\neq 0}$ where $c_1=\lambda c_2$. For $x\in K\cap\ZZ^d$, as $\ell_{c_1}(x)=\lambda\ell_{c_2}(x)$, we see that $\AR_{c_1}(K)=\lambda\AR_{c_2}(K)$.  This gives us claim~(b).    

We can partition $K\cap\ZZ^d$ by their images under $\ell_c$. Letting $K_j=\{x\in K\cap\ZZ^d: \ell_c(x)=j\}$, we have that \[K\cap\ZZ^d = \bigcup_{j\in \AR_c(K)} K_j.\] To each $K_j$, there is a corresponding $H\in h_c(K)$ given by $c^Tx=j$. This proves claim~(c). 

Now, for claim~(d), consider the moment curve $g(x)=(1, x, \dots, x^{d-1})$ and let $n=|P\cap\ZZ^d|$. For each pair of distinct lattice points $x,y\in P\cap\ZZ^d$, consider the polynomial given by $f_{x,y}(z)=g(z)^T(x-y)$. For each pair $x,y$, this polynomial is of degree at most $d-1$ and thus has at most $d-1$ roots. Let $Z$ be the union of the sets of all roots of each possible $f_{x,y}(z)$. Thus, $|Z| \leq (d-1)\binom{n}{2}$. Choosing any integer $z\in\{0, 1, \dots, (d-1)\binom{n}{2}+1\}\setminus Z$ gives us that $\ell_{g(z)}(x)\neq \ell_{g(z)}(x)$ for all distinct $x,y\in P\cap\ZZ^d$. Thus, no two lattice points of $K$ map to the same place in the arithmetic range, giving us our final claim. 
\end{proof}

To conclude this section, we provide two examples: the first makes it clear that the arithmetic and lattice widths are truly distinct notions of width; and the second allows us to realize arbitrary finite sets as arithmetic ranges of certain simplices.

\begin{example}
\label{ex: aw_vs_lw}
        Let $P=\convhull((0,0),(1,0), (\tfrac{3}{5},\tfrac{14}{5})).$ Since $P\cap\ZZ^2=\{(0,0),(1,0)\}$, the optimal arithmetic width is obtained in the directions $c^*=\pm(0,1)$ that collapse $(0,0)$ and $(1,0)$, giving us $\aw(P)=1$. The lattice width in this direction is \[w_{\pm(0,1)}(P) = \tfrac{14}{5}.\] Additionally, for the lattice width, we have that $w(P)=1$. However, the only directions that obtain the lattice width for $P$ are $\pm(1,0)$. From this, we can conclude that the directions where the optimal arithmetic width is obtained can differ from the directions where the optimal lattice width is obtained. In particular, the sets where the respective minimal widths are obtained can form disjoint sets.
\end{example}

\begin{example}
\label{ex: any finite set is an arithmetic range}
    Sets arising as the images of lattice points under a linear functional appear in several areas of combinatorics and algebra \cite{weighted_ehrhart, MR1992831, Secant}. One such area is in the study of numerical semigroups, that is, sets of the form
    \[S=\<g_1,\dots,g_k\> = \{n\in \ZZ_{\geq 0}: n= a_1g_1+\dots+a_kg_k, \, a_i\in\ZZ_{\geq 0} \text{ for } i\in [k]\}.\]
    The numerical semigroup polytope of $S$ is 
    \[P_S=\convhull\big(\tfrac{1}{g_1}e_1,\dots, \tfrac{1}{g_k}e_k\big),\]
    where $e_i$ is the standard $i$-th basis vector. The lattice points in $nP_S$ correspond to the \emph{factorizations} of $n$ in $S$; that is, each $(x_1,\dots, x_k)\in nP_S\cap\ZZ^k$ corresponds to a nonnegative integer solution to $n=g_1x_1+\dots+g_kx_k$.  Applying the linear functional $c = (1,\dots,1)$ to such a lattice point yields its \emph{factorization length}, and as such, $\AR(nP_S)$ coincides with the set of lengths of $n \in S$.  As stated in \cite[Theorem~3.3]{sets_of_length}, any finite nonempty set $L \subseteq \NN_{\geq 2}$ occurs as the set of lengths of some element $n$ of some numerical semigroup $S$.  By translating $nP_S$ appropriately and applying Proposition~\ref{prop: aw_properties}\eqref{prop: aw_properties; translation}, we conclude that every finite subset of $\ZZ$ occurs as the arithmetic range of some rational simplex.  

\end{example}

%\fromb{Add proof environment for main theorem 4 here.}

\begin{proof}[Proof of Main Theorem~\ref{mainthm:two-examples}]
This follows from Example~\ref{ex: aw_vs_lw} and Example~\ref{ex: any finite set is an arithmetic range}.  
\end{proof}
%%%%%%%%%%%%%%%%%%%%%%%%%%%%%%%%%%%%%%%%%%%%%%%%%%%%%%%%%%%%%%%%%%%%%%%%%%%%%%
%%%%%%%%%%%%%%%%%%%%%%%%%%%%%%%%%%%%%%%%%%%%%%%%%%%%%%%%%%%%%%%%%%%%%%%%%%%%%%
%%%%%%%%%%%%%%%%%%%%%%%%%%%%%%%%%%%%%%%%%%%%%%%%%%%%%%%%%%%%%%%%%%%%%%%%%%%%%%

\section{The arithmetic range of convex bodies}

In this section, we prove our first main result:\ for sufficiently large dilates of convex bodies, the arithmetic range is ``almost" an arithmetic progression.  We first make this notion precise.  

\begin{definition}
\label{def: almost arithmetic progression}
    We say a finite, nonempty set $S\subset\ZZ$ is an \emph{almost arithmetic progression} with step size $\lambda$, left bound $t$, and right bound $t'$ if
    \[
    S = \{m, m+\lambda, \dots, M-\lambda, M\}
           \setminus ( A \cup A')
    \] 
    for some $A \subseteq [m,m+t] \cap (\lambda \ZZ + m)$ and $A' \subseteq [M-t',M] \cap (\lambda \ZZ + m)$, 
    where $m=\min(S)$ and $M=\max(S)$.  
    Said another way,
    \[ (\lambda \ZZ + m) \supseteq S \supseteq [m+t, M-t'] \cap (\lambda \ZZ + m).
    \]
    We call 
    \[
    \gaps(S) = A \cup A',
    \qquad
    \gaps_l(S) = A, 
    \qquad \text{and} \qquad
    \gaps_r(S) = A'
    \]
    the \emph{gaps}, \emph{left gaps}, and \emph{right gaps} of $S$, respectively.   
\end{definition}

\begin{example}
\label{ex: almost arithmetic progression}
Returning to Example~\ref{ex: any finite set is an arithmetic range}, for $S = \langle 22,79,91,190 \rangle$ and $n = 4180$, we have
\[
\AR(nP_S) = \{22\} \cup \{28, 31, 34, \ldots, 172\} \cup \{178, 190\},
\]
which is an almost arithmetic progression with step size $\lambda = 3$, left bound $t = 3$, and right bound $t' = 15$.  
It turns out for any $n \in S$, the arithmetic range $\AR(nP_S)$ is an almost arithmetic progression with identical step size and left and right bounds.  This phenomenon is known to occur for any numerical semigroup; see the discussion in Remark~\ref{rmk: structure theorem for sets of length}.  
\end{example}

In what follows, we say $K\subseteq\RR^d$ is a convex body if it is a compact, convex set. We do not require that $K$ has a non-empty interior, allowing $\dim(K) < d$. Additionally, let $B_r(c)\subset\RR^d$ be the Euclidean ball of radius $r>0$ centered at $c\in\RR^d$.

\begin{theorem}
\label{thm: AR structure}
    Let $K$ be a convex body in $\mathbb{R}^d$ whose affine span contains a lattice point, and let $\ell_c:\ZZ^d\to\ZZ$ be a linear functional. For $n\gg 0$, the arithmetic range of $nK$ is an almost arithmetic progression whose step size, left bound, and right bound depend only on $K$. 
\end{theorem}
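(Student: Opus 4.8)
The plan is to reduce the statement about dilates $nK$ to a statement about the semigroup structure of lattice points in the cone over $K$, mimicking the passage from Ehrhart theory to the homogenization cone. First I would assume, after an integer translation (which changes neither $\AR_c$ by Proposition~\ref{prop: aw_properties}\eqref{prop: aw_properties; translation} nor the conclusion), that $\0$ lies in the affine span of $K$; after a further unimodular change of coordinates I may assume the affine span of $K$ is a coordinate subspace, so I can work inside $\ZZ^{d'}$ with $d' = \dim K$ and $K$ full-dimensional there. Form the cone $C = \overline{\{(x, n) : n \ge 0,\ x \in nK\}} \subseteq \RR^{d'+1}$. Then $(x,n) \in C \cap \ZZ^{d'+1}$ with $n \ge 1$ exactly records a lattice point of $nK$, and the value $\ell_c(x) = c^T x$ is a linear functional on this cone. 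The key structural input is that the monoid $C \cap \ZZ^{d'+1}$ is finitely generated (Gordan's lemma); pick generators $(v_1,r_1),\dots,(v_s,r_s)$, and set $\lambda = \gcd\{\, \ell_c(v_i) - \ell_c(v_j)\cdot \tfrac{r_i}{r_j} \text{-type differences}\,\}$ — more precisely $\lambda$ should be the gcd of $\ell_c(v_i) r_j - \ell_c(v_j) r_i$ over all generator pairs, which is the natural "slope denominator" controlling the step size. The minimum and maximum of $\ell_c$ on $nK$ are $nm$ and $nM$ by linearity and compactness, giving the outer window.

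The main steps, in order: (1) Normalize via translation and unimodular map as above, and build the cone $C$ and its Hilbert basis $\{(v_i,r_i)\}$. (2) Show the step size: every lattice point $(x,n)$ of $C$ is a nonnegative integer combination of generators, so $\ell_c(x)$ lies in a fixed coset; conversely, using two generators $(v_i,r_i), (v_j,r_j)$ whose scaled slopes differ, combinations of the form $a(v_i,r_i) + b(v_j,r_j)$ with $a r_i + b r_j = n$ fixed let $\ell_c$ move in steps of (a divisor of) $\lambda$ once $n$ is large enough that enough such $(a,b)$ exist. (3) Prove the interior is filled: this is the heart of the argument and should follow a "Apéry set / conductor" style pigeonhole. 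Fix an interior lattice point $x_0 \in n_0 K$ with $\ell_c(x_0)$ congruent to $m$-coset base; then for large $n$, any target value $v$ in the coset with $nm + t \le v \le nM - t'$ can be hit: translate by $x_0$ to stay interior (Minkowski-sum reasoning, since $n_0 K + (n-n_0)K = nK$), and adjust using the small generators to land exactly on $v$, the bounds $t, t'$ being the "room" needed to absorb the bounded correction terms. (4) Observe the constants $\lambda, t, t'$ produced depend only on the Hilbert basis, hence only on $K$ (and $c$, but one can fix $c$; the theorem as stated allows dependence on $K$ with $c$ given).

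The step I expect to be the main obstacle is (3), filling the interior window: I need a uniform (in $n$) bound on how far from the extremes the arithmetic progression is guaranteed to be complete, and this requires carefully controlling the interplay between the "dilation direction" generator(s) and the lattice points that move $\ell_c$ by $\pm\lambda$. Concretely, the danger is that near $nm$ or $nM$ the only available lattice points are forced combinations of a few generators whose $\ell_c$-values skip around, so some coset elements are genuinely missing there — this is exactly why the theorem allows the bounded exceptional sets $A, A'$ rather than claiming a full progression. I would handle this by separating a lattice point $(x,n) \in C$ into a "bulk" part lying in the relative interior of $C$ (contributing full flexibility in $\ell_c$) plus a bounded "boundary" part supported on a sub-cone of $C$; a compactness/finiteness argument bounds the boundary contribution by a constant, and that constant becomes $t$ on the low side and $t'$ on the high side. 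A secondary technical point is verifying that $\lambda$ defined from the Hilbert basis genuinely equals the eventual step size and not a proper divisor of it — handled by checking both inclusions in the displayed characterization $(\lambda\ZZ + m) \supseteq \AR_c(nK) \supseteq [nm+t, nM-t'] \cap (\lambda\ZZ+m)$ of Definition~\ref{def: almost arithmetic progression}.
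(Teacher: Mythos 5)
Your proposal has a genuine gap at its foundation: the theorem is stated for an arbitrary convex body $K$ (a compact convex set, per the paper's convention), but your key structural input --- Gordan's lemma applied to $C \cap \ZZ^{d'+1}$ for the homogenization cone $C$ over $K$ --- requires $C$ to be a \emph{rational polyhedral} cone. If $K$ is, say, a disk, an ellipsoid, or a triangle with an irrational vertex, the monoid of lattice points of $C$ is not finitely generated, there is no Hilbert basis $\{(v_i,r_i)\}$, and your definition of $\lambda$ as a gcd over generator pairs, as well as steps (2) and (3), never get off the ground. So the argument as written proves the result only for rational polytopes, which is strictly weaker than the statement (and weaker than what the paper needs, since Theorem~\ref{thm: AR structure} is invoked for general convex bodies before specializing to polytopes in the next section). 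Even within the rational-polytope case, the part you flag as the obstacle --- the uniform-in-$n$ bounds $t,t'$ on where the progression is complete --- is exactly where the content lies, and the ``bulk plus bounded boundary'' decomposition is not yet an argument.

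The paper avoids monoid generation entirely and uses a covering-radius argument that works for any convex body: after translating so that $\aff(K)$ contains the origin, set $\mathcal L = \aff(K)\cap\ZZ^d$ and define $\lambda$ by $\lambda\ZZ = \ell_c(\mathcal L)$ (this is the correct intrinsic step size, replacing your gcd of cross-differences). One then chooses $r>0$ so that every $\mathcal L$-translate of the ball $B_r(0)\cap \ker(\ell_c)\cap\aff(K)$ contains a lattice point, and observes that for $n\gg 0$ the dilate $nK$ contains a radius-$r$ tube around the segment joining two points whose $\ell_c$-values are within fixed distances $t$ and $t'$ of $nm$ and $nM$ (these distances coming from one fixed dilate $NK$ that contains a single such ball around a lattice point). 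Every slice of that tube at a level $j\in\lambda\ZZ$ is a lattice translate of the ball, hence contains a lattice point of $nK$; convexity is the only property of $K$ used. If you want to salvage your route, you would either need to restrict the statement to rational polytopes or replace Gordan's lemma with an approximation of $K$ from inside by a rational polytope sharing its lattice points in large dilates --- at which point you are essentially rebuilding the paper's geometric argument.
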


\begin{proof}
Let $m=\min_{x\in K}\ell_c(x)$ and $M=\max_{x\in K}\ell_c(x)$.  
Let $\mathcal{L} = \aff(K) \cap \mathbb{Z}^d$ and $J = K \cap \mathcal{L}$. Since $\aff(K)$ contains a lattice point, by Proposition~\ref{prop: aw_properties}\eqref{prop: aw_properties; translation} we may assume $\aff(K)$ contains the origin, so in particular $\mathcal L$ is a sublattice of $\ZZ^d$. Fix $\lambda \in \ZZ$ such that $\lambda\ZZ = \ell_c(\mathcal L)$.  We claim 
\[
nJ \cap \mathbb{Z}^d = nK \cap \mathbb{Z}^d
\]
for all $n \in \mathbb{Z}_{\geq 0}$. The inclusion $nJ \cap \mathbb{Z}^d \subseteq nK \cap \mathbb{Z}^d$ follows immediately from $J \subseteq K$. For the other direction, suppose $z \in nK \cap \mathbb{Z}^d$. Then $z = nx$ for some $x \in K$. As $z \in \aff(K) \cap \mathbb{Z}^d = \mathcal{L}$, we have $z \in \aff(\mathcal{L})$. If $x \notin \aff(\mathcal{L})$, then $nx \notin \aff(\mathcal{L})$, a contradiction. Thus, $x \in \aff(\mathcal{L})$, so $x \in J$ and $z \in nJ$, proving the claimed equality. When considering $nK \cap \mathbb{Z}^d$, we may therefore replace $K$ with $J = K \cap \mathcal{L}$. Thus, we may assume without loss of generality that the affine span of $K$ has the same dimension as the rank of its lattice points, that is, $\dim(\aff(K)) = \mathrm{rank}(\aff(K) \cap \mathbb{Z}^d)$. 

Without loss of generality, we can assume $c$ is primitive, since if $c=\mu c_0$ for $\mu\in \ZZ$ and $c_0\in \ZZ^d$, then $\AR_c(K)= \mu\AR_{c_0}(K)$. Let $H=\ker(\ell_c)\cap \aff(K)$.  If $H = \aff(K)$, then $\AR_c(nK) \subseteq \{0\}$ for all $n$, and all desired conclusions immediately follow.  As such, we may assume $\dim H = \dim \aff(K) - 1$.  

Choose $r>0$ such that any translate of $B=B_r(0) \cap H$ in $H$ will contain a lattice point. Now, dilate $K$ until $K$ contains a lattice point $x$ such that $\widetilde{B} \cap H \subseteq K$ for $\widetilde{B}= B_r(x) \cap \aff(K)$. Call this dilation factor $N$. Note that $\dim(B)= \dim(\widetilde{B})-1$, and that $\widetilde{B}$ contains a translate of $B$, namely $\widetilde{B} \cap (H+x)$. Let $p_m, p_M\in K$ attain the minimum value $m$ and maximum values $M$ respectively.
Define 
\[t:=c^T(x-Np_m) \quad\text{and}\quad t':=c^T(Np_M-x)\]
as the lengths of the intervals $[\ell_c(x), \ell_c(Np_m)]$ and $[\ell_c(x), \ell_c(Np_M)]$, respectively.
Now, for each $n$, let $\hat{p}_m(n)$ and $\hat{p}_M(n)$ denote the unique points on the line containing $np_m$ and $np_M$ such that 
\[t=\ell_c(\hat{p}_m(n)) - \ell_c(np_m) \quad\text{and}\quad t'= \ell_c(np_M) - \ell_c(\hat{p}_M(n)),\]
i.e., the images of $\hat{p}_m(n)$ and $\hat{p}_M(n)$ are distances $t$ and $t'$ from the images of $np_m$ and $np_M$, respectively.  Note that for $n \gg N$, we have $\hat{p}_m(n), \hat{p}_M(n) \in K$.  More precisely, any point on the line containing $np_m$ in direction $c$ can be written as $x_{\gamma} = np_m +\gamma c$ for $\gamma\in\RR$. Writing the desired $\hat{p}_m(n)$ as $\hat{p}_m(n) =np_m +\gamma c$ for some $\gamma\in\RR$ gives us that 
\[\ell_c(\hat{p}_m(n)) = \ell_c(np_m) + t = \ell_c(np_m +\gamma c)= \ell_c(np_m)+\gamma||c||^2.\]
Solving for $\gamma = \tfrac{t}{||c||^2}$, and applying a similar argument for $\hat{p}_M(n)$, gives us that 
\[\hat{p}_m(n) = np_m +\frac{t}{||c||^2} c
\qquad \text{and} \qquad
\hat{p}_M(n) = np_M - \frac{t'}{||c||^2}c.\]

By B\'ezout's identity, since $c$ is primitive, for any integer $j$, there exists $x\in\ZZ^d$ such that $c^Tx=j$. Thus, for any $j \in \lambda\ZZ$ we have $\ell^{-1}_c(j)\cap\ZZ^d\neq \emptyset$.  In particular, for any $j\in [\ell_c(\hat{p}_m (n)), \ell_c(\hat{p}_M(n))] \cap \lambda\ZZ$, we have that $\ell^{-1}_c(j) \cap \aff(K) = H + w$ for some $w \in \mathcal L$. Moreover, since $K$ is convex and $B_r(\hat{p}_m(n)), B_r(\hat{p}_M(n)) \subseteq K$, we have $B_r(z)\cap\aff(K)\subseteq K$ for any $z\in [\hat{p}_m(n), \hat{p}_M(n)]$. Particularly, if we additionally assume $z\in\ell_c^{-1}(j)$,  then the ball $B_r(z)\cap \ell_c^{-1}(j)\cap\aff(K)$ is a translate of $B$ in $H+w$. As such, by the choice of $r$ and the fact that $w \in \ZZ^d$, we see $B_r(z)\cap \ell_c^{-1}(j) \cap \aff(K)$ contains an integer point in $nK$.  Thus, we have found constants $t$, $t'$, $\lambda$ dependent only on $K$ such that, for $n \gg 0$, the set $AR_{c}(nK)\cap [nm+t, nM-t']$ is an arithmetic progression of step size $\lambda$.  
\end{proof}

\begin{proof}[Proof of Main Theorem~\ref{mainthm: AR structure}]
Apply Theorem~\ref{thm: AR structure}.  
\end{proof}

\begin{figure}[t]
    \centering
    \includegraphics[width=0.6\linewidth]{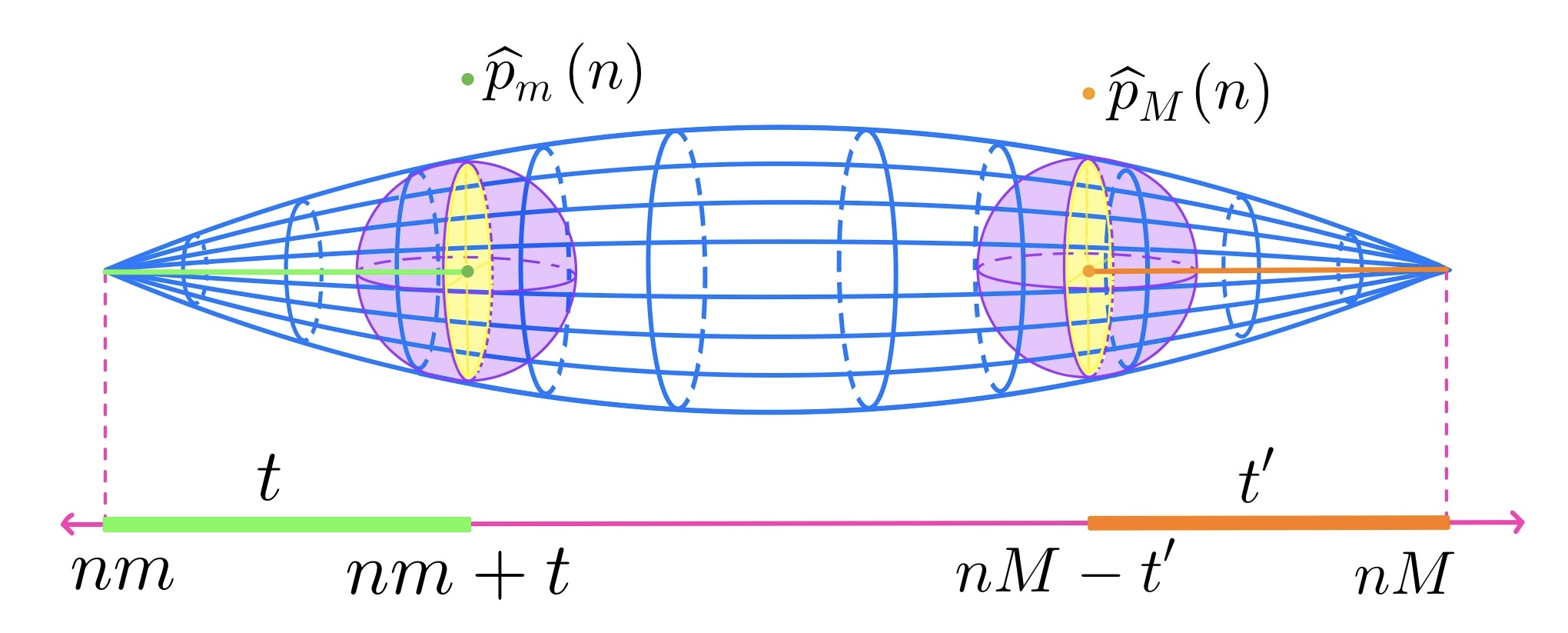}
    \caption{Illustration of the proof of Theorem~\ref{thm: AR structure}.}
    \label{fig:AAP_Proof_Sketch}
\end{figure}

%%%%%%%%%%%%%%%%%%%%%%%%%%%%%%%%%%%%%%%%%%%%%%%%%%%%%%%%%%%%%%%%%%%%%%%%%%%%%%
%%%%%%%%%%%%%%%%%%%%%%%%%%%%%%%%%%%%%%%%%%%%%%%%%%%%%%%%%%%%%%%%%%%%%%%%%%%%%%
%%%%%%%%%%%%%%%%%%%%%%%%%%%%%%%%%%%%%%%%%%%%%%%%%%%%%%%%%%%%%%%%%%%%%%%%%%%%%%

\section{The arithmetic width of rational polytopes}

In this section, we will build upon our results from Theorem~\ref{thm: AR structure} for our analysis of the arithmetic width of dilates of convex polytopes. We begin this section with definitions that explore the complement of the arithmetic range: the sets of gaps. From there, we compare gaps across subsequent dilates of polytopes, which gives enough tools to prove Theorems~\ref{thm: eql in fixed directions} and~\ref{thm: eql_non_fixed_directions}. In what follows, we say $P$ is a \textit{rational polytope} if $P=\convhull(v_1,\dots,v_k)\subset\RR^d$ where $k$ is finite and each $v_i\in\QQ^d$ for $i\in [k]$. If each $v_i\in\ZZ^d$, we say that $P$ is \textit{integral}. The \textit{denominator of $P$}, denoted $\text{denom}(P)$, is the smallest positive dilation factor $n$ such that $nP$ is an integral polytope \cite{ziegler}.

\subsection{Arithmetic width in fixed directions}
In the previous section, we found that gaps of the arithmetic range of large dilates of a convex body occur near the extremes, motivating us to examine the behavior of the gaps as we dilate a polytope.

\begin{lemma}
\label{lemma: gap set equality}
Let $P$ be a rational polytope in $\mathbb{R}^d$ with denominator $D$, and fix a linear functional $\ell_c:\ZZ^d \to \ZZ$. Then, for any $n\gg 0$ for which $\aff(nP)$ contains a lattice point,  
\[|\gaps(AR_{c}(nP))| = |\gaps(AR_{c}((n+D)P))|.\]
\end{lemma}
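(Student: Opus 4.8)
The plan is to leverage Theorem~\ref{thm: AR structure}, which tells us that for $n \gg 0$ the arithmetic range $\AR_c(nP)$ is an almost arithmetic progression whose step size $\lambda$, left bound $t$, and right bound $t'$ are fixed constants depending only on $P$ (not on $n$). The key observation is that passing from $nP$ to $(n+D)P$ translates the whole picture by an integer amount on each relevant line: since $DP$ is an integral polytope, $DP$ contains a lattice point $v$ (or rather, after the translation reductions in the proof of Theorem~\ref{thm: AR structure}, we may work inside $\aff(P)\cap\ZZ^d$ directly), and $p_m, p_M$ scale linearly so that $\ell_c((n+D)p_m) = \ell_c(np_m) + D\ell_c(p_m)$ and likewise for $p_M$. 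Thus the two ``windows'' $[nm+t, nM-t']$ and $[(n{+}D)m+t, (n{+}D)M-t']$ have the same length shifts at each end, and on both windows the arithmetic range is a full arithmetic progression of step $\lambda$. Consequently all gaps of $\AR_c(nP)$ lie in the left interval $[nm, nm+t]$ and right interval $[nM-t', nM]$, and similarly for $(n+D)P$.

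Second, I would establish a bijection between the left gaps of $\AR_c(nP)$ and those of $\AR_c((n+D)P)$ (and similarly on the right). For this, fix a lattice point $y \in DP \cap \aff(P)$ with $\ell_c(y)$ a prescribed value — more carefully, because $D\cdot P$ is integral, $Dp_m$ can be perturbed to a nearby lattice point, but the cleaner route is: the map $z \mapsto z + D p_m'$, where $p_m'$ is a lattice point of $DP$ close to $Dp_m$, sends lattice points of $nP$ into lattice points of $(n+D)P$ by convexity (a point $\tfrac{1}{n}z \in P$ plus $p_m' / D \in P$, scaled appropriately, lands in $(n+D)P$ up to the usual Minkowski-sum argument $nP + DP \subseteq (n+D)P$). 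The cleanest formulation: for $n \gg 0$, near the bottom extreme the lattice points of $nP$ in a bounded-length window are ``the same'' as those of $(n+D)P$ shifted by a fixed integer vector, because both are governed by the local geometry of $P$ near the face where $\ell_c$ is minimized, which stabilizes. Hence the set of attained values in the bottom window $[nm, nm+t]$ of $nP$, translated by $\ell_c(Dp_m')$, equals the set of attained values in the bottom window of $(n+D)P$; therefore the complements within the respective arithmetic progressions — i.e.\ the left gaps — are in bijection, so $|\gaps_l(\AR_c(nP))| = |\gaps_l(\AR_c((n{+}D)P))|$, and symmetrically on the right. Adding the two equalities gives the claim.

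The main obstacle I anticipate is making precise the ``stabilization near the extreme face'' argument, i.e.\ that for $n$ large the configuration of lattice points of $nP$ within a bounded $\ell_c$-window above $nm$ is, after the fixed translation by $Dp_m'$, \emph{identical} to that of $(n+D)P$. One has the easy inclusion $nP + DP \subseteq (n+D)P$ (so $nP \cap \ZZ^d$ translated by a lattice point of $DP$ injects into $(n+D)P \cap \ZZ^d$, giving $\leq$ in both gap counts after checking the translated window still lands inside the stabilized region), but the reverse inclusion — that every lattice point of $(n+D)P$ in the bottom window arises this way — requires knowing that the recession behavior near the minimizing face has kicked in, which is exactly where $n \gg 0$ is used and where one must invoke (a relative/translated version of) the ball-covering argument from the proof of Theorem~\ref{thm: AR structure}: a bounded neighborhood of the minimizing face of $(n+D)P$ looks like a translate of the corresponding neighborhood in $nP$ once $n$ exceeds the threshold $N$ from that proof. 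I would handle this by choosing $n$ large enough that $nP$ already contains translates of the ball $\widetilde B$ along the relevant segment, so that both $nP$ and $(n+D)P$ contain all ``expected'' lattice points in the interior portion of each window, reducing the discrepancy to the genuinely extreme slices, which are literally congruent after translation by $Dp_m'$ for $n$ past the threshold.
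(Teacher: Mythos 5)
Your proposal is correct and follows essentially the same route as the paper: localize the gaps to bounded windows at the two extremes via Theorem~\ref{thm: AR structure}, then identify the bottom (resp.\ top) window of $(n+D)P$ with that of $nP$ translated by the integer vector $Dp_m$ (resp.\ $Dp_M$), so the gap sets are translates and have equal cardinality. One simplification: since the minimum of $\ell_c$ over $P$ is attained at a vertex and $DP$ is integral, $Dp_m$ is itself a lattice point, so no perturbation to a nearby lattice point $p_m'$ is needed; the ``stabilization near the extreme face'' that you flag as the main obstacle is precisely the identity $P_{m,n+D}=P_{m,n}+Dp_m$ that the paper asserts for $n\gg 0$, and your sketch via $nP+DP\subseteq(n+D)P$ plus the tangent-cone/ball-covering argument is an adequate (indeed more explicit) justification of it.
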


\begin{proof}
Let $m = \min_{x \in P} \ell_c(x)$ and $M = \max_{x \in P} \ell_c(x)$. 
For sufficiently large $n$, Theorem~\ref{thm: AR structure} provides constants $t, t' > 0$ and step size $\lambda$ such that 
\[
\AR_c(nP) \cap [nm + t', nM - t]
\]
is a gap-free arithmetic progression of step size $\lambda$, and any gaps in $\AR_c(nP)$ may only occur in the regions $[nm, nm + t']$ or $[nM - t, nM]$.

Define the region near the minimum as
\[
P_{m, n} = nP \cap \ell_c^{-1}([nm, nm + t'])
\]
and for $(n+D)P$ as
\[
P_{m, n+D} = (n+D)P \cap \ell_c^{-1}([(n+D)m, (n+D)m + t']).
\]

Let $p_m \in P$ satisfy $\ell_c(p_m) = m$ and $p_M\in P$ satisfy $\ell_c(p_M) = M$. In the proof of Proposition~\ref{prop: aw_properties}\eqref{prop: aw_properties; translation}, we established that for any integral vector $v$ and convex set $K \subset \mathbb{R}^d$, we have $\AR_c(K + v) = \AR_c(K) + \ell_c(v)$. Thus, they will also have translated gap sets. In particular, since $P$ is a rational polytope, translation by $Dp_m$ or $Dp_M$ will be integer translations. As this preserves lattice points, we have that,
\[
P_{m, n+D} = P_{m, n} + Dp_m \qquad \text{and} \qquad P_{M, n+D} = P_{M, n} + Dp_M,
\]
for $n \gg 0$, so
\[
\AR_c(P_{m, n+D}) = \AR_c(P_{m, n}) + Dm \qquad \text{and} \qquad \AR_c(P_{M, n+D}) = \AR_c(P_{M, n}) + DM.
\]
Thus, we see that
\begin{align*}
\gaps_l(\AR_c((n+D)P)) &= \gaps_l(\AR_c(nP)) + Dm
\qquad \text{and}
\\
\gaps_r(\AR_c((n+D)P)) &= \gaps_r(\AR_c(nP)) + DM.
\end{align*}
Since $\gaps(\AR_c(P)) = \gaps_l(\AR_c(P)) \cup \gaps_r(\AR_c(P))$, the proof is now complete.  
\end{proof}

Having proved Lemma~\ref{lemma: gap set equality}, we now have the tools to proceed with the proof of Theorem~\ref{thm: eql in fixed directions}.  The proof also identifies the linear coefficient of the resulting quasilinear function.  

\begin{theorem}
\label{thm: eql in fixed directions}
Let $P$ be a rational polytope in $\mathbb{R}^d$. Then, for a fixed direction $c \in \mathbb{Z}^d$, the arithmetic width $\aw_c(nP)$ is an eventually quasilinear function of $n \in \mathbb{Z}_{\geq 0}$.
\end{theorem}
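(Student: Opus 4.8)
The goal is to show that $\aw_c(nP)$ is eventually quasilinear in $n$, i.e., that there is a period $\pi$ and, for each residue class $r \bmod \pi$, constants $a_r, b_r$ with $\aw_c(nP) = a_r n + b_r$ for all large $n \equiv r \pmod{\pi}$. The plan is to decompose $\aw_c(nP)$ into a ``bulk'' term coming from the gap-free portion of the arithmetic range supplied by Theorem~\ref{thm: AR structure}, plus a correction term coming from the finitely many gaps near the two extremes, and then show the bulk term is honestly quasilinear and the correction term is eventually periodic.

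\textbf{Step 1: Set up the decomposition.} Let $D = \mathrm{denom}(P)$, let $m = \min_{x\in P}\ell_c(x)$, $M = \max_{x\in P}\ell_c(x)$, and let $\lambda$ be the step size from Theorem~\ref{thm: AR structure} applied to $K = P$ (after the harmless reduction that $\aff(P)$ meets the lattice; if it does not, one first intersects with the relevant coset, or notes $\ell_c$ is constant there and the statement is trivial). For $n \gg 0$ the set $\AR_c(nP) \cap [nm + t, nM - t']$ is a full arithmetic progression of step $\lambda$, so its cardinality is $\lfloor (nM - t' - nm - t)/\lambda \rfloor + 1$, which (since $nm, nM$ need not be integers but are rationals with denominator dividing $D$) is a quasilinear function of $n$ with period dividing $D\lambda$ and leading coefficient $(M-m)/\lambda$. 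Writing $\aw_c(nP) = |\AR_c(nP)|$, we have
\[
\aw_c(nP) = \bigl|\AR_c(nP) \cap [nm+t, nM-t']\bigr| + \bigl|\gaps_l(\AR_c(nP))^{c}\bigr| + \bigl|\gaps_r(\AR_c(nP))^{c}\bigr|,
\]
where by $\gaps_l(\cdot)^c$, $\gaps_r(\cdot)^c$ I mean the elements of $\AR_c(nP)$ lying in the left and right boundary windows $[nm, nm+t]$ and $[nM-t', nM]$ respectively but outside the central progression — equivalently, the boundary windows' full progressions minus the gap sets. The number of lattice-progression slots in each boundary window is itself quasilinear (in fact eventually constant once $t, t'$ are fixed) up to a bounded fluctuation, and the key point is that the gap sets themselves stabilize.

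\textbf{Step 2: Stabilize the boundary contributions via Lemma~\ref{lemma: gap set equality}.} Lemma~\ref{lemma: gap set equality} — more precisely its proof — shows that for $n \gg 0$ the left boundary piece $P_{m,n}$ and right boundary piece $P_{M,n}$ satisfy $P_{m,n+D} = P_{m,n} + Dp_m$ and $P_{M,n+D} = P_{M,n} + Dp_M$, integer translations that preserve lattice points; hence $\gaps_l(\AR_c((n+D)P)) = \gaps_l(\AR_c(nP)) + Dm$ and similarly on the right, and likewise the \emph{presence} set (the boundary window lattice-progression points that do lie in the range) translates rigidly. Consequently the two boundary contributions to $|\AR_c(nP)|$ are exactly periodic with period $D$ for $n \gg 0$: $|\gaps_l(\AR_c((n+D)P))| = |\gaps_l(\AR_c(nP))|$ and the analogous equalities for the non-gap boundary points. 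Adding the quasilinear bulk term from Step 1 to these eventually-periodic boundary terms yields a quasilinear function of $n$ with period dividing $\mathrm{lcm}(D, D\lambda) = D\lambda$, proving the theorem; moreover the leading coefficient is $(M-m)/\lambda$, which is the ``linear coefficient'' the remark after the statement refers to.

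\textbf{Main obstacle.} The delicate point is bookkeeping at the interface between the ``bulk'' window $[nm+t, nM-t']$ and the boundary windows $[nm, nm+t]$, $[nM-t', nM]$: the cut points $nm+t$ and $nM-t'$ move with $n$ and are generally non-integral, so one must be careful that (a) no element of $\AR_c(nP)$ is double-counted or dropped at the seam, and (b) the boundary windows one uses in the Step~2 translation argument are genuinely the same windows, up to the translation $Dp_m$ (resp.\ $Dp_M$), for $n$ and $n+D$ — which requires choosing the windows to have $n$-independent \emph{length} (they do: lengths $t + {}$(something $O(1)$) and $t'+O(1)$) and then observing that $\ell_c(Dp_m) = Dm$, $\ell_c(Dp_M) = DM$ shift the window endpoints by exactly the amount the endpoints of $\AR_c((n+D)P)$ shift. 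Once the windows are pinned down as translates and the seam is handled by a consistent half-open convention, everything reduces to: (quasilinear count of a truncated arithmetic progression) $+$ (eventually $D$-periodic integer) $=$ eventually quasilinear. I expect no genuine difficulty beyond this careful accounting, since all the structural content is already carried by Theorem~\ref{thm: AR structure} and Lemma~\ref{lemma: gap set equality}.
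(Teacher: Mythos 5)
Your proposal is correct and follows essentially the same route as the paper: both arguments combine Theorem~\ref{thm: AR structure} (the bulk of $\AR_c(nP)$ is a full arithmetic progression of step $\lambda$, contributing a quasilinear count with leading coefficient $(M-m)/\lambda$) with the rigid integer-translation argument of Lemma~\ref{lemma: gap set equality} to show the boundary/gap contribution is eventually $D$-periodic. The paper merely packages the bookkeeping as (full progression from $\min$ to $\max$) minus $|\gaps|$, using the periodic offsets $t_k, t'_k$ of the extremes, rather than your (central window) plus (boundary presence sets) split; the two decompositions are equivalent.
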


\begin{proof}
Let $m = \min_{x \in P} \ell_c(x)$ and $M = \max_{x \in P} \ell_c(x)$. Let $\lambda$ be the step size given by Theorem~\ref{thm: AR structure}. Lastly, let $D = \mathrm{denom}(P)$. We will prove that $\aw_c(nP)$ is eventually quasilinear by proving that
\begin{equation}
\label{eql:fixed-directions}
    \aw_c((n+D)P) = \aw_c(nP) + \tfrac{1}{\lambda} D(M - m)
\end{equation}
for all $n \in \ZZ_{\ge 0}$ for which $\aff(nP)$ contains a lattice point.  
Fix $k \in \{0, 1, \dots, D-1\}$ for which $\aff(kP)$ contains a lattice point.  Fix $t_k \in \mathbb{R}_{\geq 0}$ such that
\[
\max \AR_c(kP) = \max_{x \in kP} c^T x - t_k = kM - t_k.
\]
Then, for all $n \equiv k \bmod D$ with $n > k$, we have
\[
\max \AR_c(nP) = nM - t_k.
\]
Similarly, fix $t'_k \in \mathbb{R}_{\geq 0}$ such that for all $n \in \mathbb{Z}_{>0}$ with $n \equiv k \bmod D$,
\[
\min \AR_c(nP) = n m + t'_k.
\]
As such, if $n \equiv k \bmod D$, then
\[
\aw_c(nP)
= \tfrac{1}{\lambda}\big((nM - t_k) - (n m + t'_k) + 1\big) - |\gaps(\AR_c(nP))|.
\]
By an analogous argument, we obtain
\[
\aw_c((n+D)P) = \tfrac{1}{\lambda}\big(((n+D)M - t_k) - ((n+D)m + t'_k) + 1\big) - |\gaps(\AR_c((n+D)P))|.
\]
Applying Lemma~\ref{lemma: gap set equality}, we obtain
\[
\aw_c((n+D)P) - \aw_c(nP)
= \tfrac{1}{\lambda}D(M - m).
\]
This proves equality~\eqref{eql:fixed-directions}, so we conclude $\aw_c(nP)$ is indeed eventually quasilinear in $n$.
\end{proof}

This theorem has implications in discrete optimization. Integer linear programs (ILPs) are concerned with finding integral solutions to maximizing or minimizing linear functionals over polyhedra. As these problems are often more difficult than their real linear programming counterparts (LPs), to approximate solutions to ILPs, one often first solves the real relaxation and then rounds. The additive integrality gap is a way of measuring the accuracy of this approximation method~\cite{hosten2007computing, eisenbrand2008parametric}. Here we present a different construction.  

\begin{definition}
    Let $P$ be a rational polytope in $\mathbb{R}^d$ and fix linear functional $\ell_c$ for $c \in \mathbb{Z}^d$. We define the (additive) maximum and minimum integrality gaps as \[I_M(P,c)= \max_{x\in P}\ell_c(x) - \max_{x\in P\cap\ZZ^d}\ell_c(x) 
    \qquad\text{and} \qquad 
    I_m(P,c)= \min_{x\in P\cap\ZZ^d}\ell_c(x) - \max_{x\in P}\ell_c(x).\]   
\end{definition}

\begin{corollary}
\label{cor: integrality gap}
Let $P$ be a rational polytope in $\mathbb{R}^d$ and fix linear functional $\ell_c$ for $c \in \mathbb{Z}^d$. Then, for integers $t_1, t_2 \gg 0$ such that $t_1\equiv t_2\bmod D$ we have that \[I_M(t_1P, c) = I_M(t_2P, c) \qquad\text{and}\qquad I_m(t_1P, c) = I_m(t_2P, c).\] 
\end{corollary}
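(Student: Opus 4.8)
The plan is to extract both integrality gaps directly from quantities already computed inside the proof of Theorem~\ref{thm: eql in fixed directions}. Write $m=\min_{x\in P}\ell_c(x)$, $M=\max_{x\in P}\ell_c(x)$, and $D=\mathrm{denom}(P)$. Since $\ell_c$ is linear we have $\max_{x\in tP}\ell_c(x)=tM$ and $\min_{x\in tP}\ell_c(x)=tm$ for every $t\ge 0$, while by the very definition of the arithmetic range $\max_{x\in tP\cap\ZZ^d}\ell_c(x)=\max\AR_c(tP)$ and $\min_{x\in tP\cap\ZZ^d}\ell_c(x)=\min\AR_c(tP)$. Hence the two integrality gaps are exactly $I_M(tP,c)=tM-\max\AR_c(tP)$ and $I_m(tP,c)=\min\AR_c(tP)-tm$, i.e.\ each compares the integer optimum of $\ell_c$ on the same side as the corresponding real optimum.

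Next I would invoke the computation carried out in the proof of Theorem~\ref{thm: eql in fixed directions}: for each residue $k\in\{0,\dots,D-1\}$ for which $\aff(kP)$ contains a lattice point there are constants $t_k,t'_k\ge 0$, depending only on $k$ (and on $P$, $c$), with $\max\AR_c(nP)=nM-t_k$ and $\min\AR_c(nP)=nm+t'_k$ for all $n\equiv k\bmod D$ with $n\gg 0$. Substituting into the formulas of the previous paragraph, the linear terms cancel and we get $I_M(nP,c)=t_k$ and $I_m(nP,c)=t'_k$, so both gaps depend on $n$ only through its residue modulo $D$. If $t_1\equiv t_2\bmod D$ and both are large, they share a residue $k$, whence $I_M(t_1P,c)=t_k=I_M(t_2P,c)$ and $I_m(t_1P,c)=t'_k=I_m(t_2P,c)$, as claimed.

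I do not expect a genuine obstacle here: the statement is essentially a repackaging of the bookkeeping done for Theorem~\ref{thm: eql in fixed directions}. The only points requiring care are (i) confirming that for the relevant large $t_i$ the dilate $t_iP$ actually contains lattice points, so that $\max\AR_c$ and $\min\AR_c$ are defined and the extremal formulas apply — this is precisely the hypothesis ``$\aff(nP)$ contains a lattice point'' under which Theorem~\ref{thm: AR structure} and Lemma~\ref{lemma: gap set equality} were stated, and it holds simultaneously for all $t_i$ in a fixed residue class once one such dilate has it — and (ii) keeping the real optimum in $I_m$ on the minimum side of $tP$, so that the $tm$ terms cancel rather than accumulate (an accumulating linear term would make $I_m$ quasilinear rather than periodic). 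With these observations the two displayed equalities are immediate.
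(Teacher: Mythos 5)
Your proof is correct and follows essentially the same route as the paper: both extract $I_M(nP,c)=t_k$ and $I_m(nP,c)=t'_k$ from the extremal formulas $\max\AR_c(nP)=nM-t_k$ and $\min\AR_c(nP)=nm+t'_k$ established in the proof of Theorem~\ref{thm: eql in fixed directions}, so the gaps depend only on the residue of $n$ modulo $D$. Your observation (ii), that the second term in the stated definition of $I_m$ should be the real \emph{minimum} so the linear terms cancel, is a correct reading of the intended definition.
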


This follows immediately from Theorem~\ref{thm: eql in fixed directions}, since for each congruence class $t_1 \equiv t_2 \bmod D$, we have that $I_M(t_1P, c) = I_M(t_2P, c) = t_k$ and $I_m(t_1P, c) = I_m(t_2P, c) = t_k'$ where $t_k, t_k'$ are given in the proof of Theorem~\ref{thm: eql in fixed directions}. Thus, the integrality gap is eventually a periodic sequence (that is, quasi-constant) with period dividing $D$.

Corollary~\ref{cor: integrality gap} is closely related to themes in parametric integer programming. Earlier work by Hosten and Sturmfels~\cite{hosten2007computing} and  by Eisenbrand and Shmomin~\cite{eisenbrand2008parametric} studies the maximal additive integrality gap as you vary the data of the integer program, showing that it is always finite and computable in fixed dimension. This is often framed as varying the right-hand side of the constraints, which is equivalent to individually translating the facet-defining hyperplanes of a polytope. Dilating a polytope is one way to vary the constraints of a linear program. Their results establish that, in their more general framework, the additive integrality gap is eventually quasipolynomial. 

\begin{remark}
\label{rmk: structure theorem for sets of length}
Returning to the setting of Examples~\ref{ex: any finite set is an arithmetic range} and~\ref{ex: almost arithmetic progression}, one of the cornerstones of factorization theory is the \emph{structure theorem for sets of length}, which states that for certain families of semigroups, the set of factorization lengths of large elements are almost arithmetic sequences~\cite{structuretheorem,setsoflengthsurvey}.  For numerical semigroups specifically, the sets of length of large elements were recently shown in~\cite{structuretheoremns} to satisfy a periodicity reminiscent of Theorem~\ref{thm: eql in fixed directions}.  In fact, applying Lemma~\ref{lemma: gap set equality}, and the main ideas of its proof, to the numerical semigroup polytope~$P_S$ from Example~\ref{ex: any finite set is an arithmetic range} yields an alternative proof of their main result \cite[Theorem~4.2]{structuretheoremns}.  
\end{remark}

\subsection{Arithmetic width in minimizing directions}

In order to use Theorem~\ref{thm: eql in fixed directions} to prove the same eventually quasilinear behavior of the arithmetic width, we first examine the minimizing directions across varying dilates of our polytope. This allows us to prove our second main result, Theorem~\ref{thm: eql_non_fixed_directions}.

\begin{lemma}
\label{lemma: optimal mod directions}
    Let $P\subset\RR^d$ be a rational polytope with denominator $D$. If $n_1\equiv n_2\bmod D$, then $\aw(n_1P)$ and $\aw(n_2P)$ are obtained in the same directions for $n_1, n_2\gg 0$.
\end{lemma}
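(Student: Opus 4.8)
The plan is to combine the refined information hidden in the proof of Theorem~\ref{thm: eql in fixed directions} with a finiteness statement: only boundedly many directions can ever attain $\aw(nP)$ for a large dilate, and on that finite set everything is controlled by affine functions of $n$.

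\emph{Reductions and two estimates.} As in the proof of Theorem~\ref{thm: AR structure}, after an integer translation (Proposition~\ref{prop: aw_properties}\eqref{prop: aw_properties; translation}) and passing to $P\cap(\aff(P)\cap\ZZ^d)$ we may assume $\aff(P)$ is a linear subspace containing $\0$ with $\dim\aff(P)=\rank(\aff(P)\cap\ZZ^d)$; after a lattice change of coordinates we may further assume $P$ is full-dimensional in $\RR^d$ with $\aff(P)\cap\ZZ^d=\ZZ^d$, so that every primitive $c$ has step size $\lambda_c=1$. These reductions only identify directions with equal arithmetic width (Proposition~\ref{prop: aw_properties}\eqref{prop: aw_properties; parallel}), so the set of optimal directions is the full preimage of the optimal set in the reduced picture and it suffices to argue there; and if $\dim P\le 1$ there is nothing to prove. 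Under these normalizations the proof of Theorem~\ref{thm: eql in fixed directions} in fact produces, for each primitive $c$ and each residue $k$, a constant $C_{c,k}$ depending only on $c,k,P$ (assembled from the deficits $t_k,t'_k$ appearing there and the eventually constant quantity $|\gaps(\AR_c(nP))|$ from Lemma~\ref{lemma: gap set equality}) with
\[
\aw_c(nP)=n\,w_c(P)-C_{c,k}\qquad\text{for }n\equiv k\bmod D,\ n\gg 0 .
\]
The two facts I will use are: (i) telescoping the identity $\aw_c((n+D)P)=\aw_c(nP)+\tfrac1{\lambda_c}D\,w_c(P)$, which by Theorem~\ref{thm: eql in fixed directions} holds for \emph{all} valid $n$, from the representative $k=n\bmod D$ gives the a priori bound $\aw_c(nP)\ge(n-D)\,w_c(P)$ for every primitive $c$ and $n>D$; and (ii) since $P$ is full-dimensional, evaluating $c$ on affinely independent points of $P$ gives $w_c(P)\ge\kappa\|c\|$ for a constant $\kappa=\kappa(P)>0$.

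\emph{Finiteness of candidate directions.} Combining (i), (ii), and the trivial upper bound $\aw(nP)\le\aw_{e_1}(nP)\le w_{e_1}(nP)+1=n\,w_{e_1}(P)+1$, any $c$ attaining $\aw(nP)$ satisfies $(n-D)\kappa\|c\|\le n\,w_{e_1}(P)+1$, hence $\|c\|\le R$ for a constant $R=R(P)$ once $n$ is large. So, with the finite set $T:=\{\,c\text{ primitive}:\|c\|\le R\,\}$, we have $\aw(nP)=\min_{c\in T}\aw_c(nP)$ for all $n\gg 0$.

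\emph{Stabilization of the argmin.} Enlarging the threshold so the displayed formula holds simultaneously for every $c\in T$, we get $\aw(nP)=\min_{c\in T}\bigl(n\,w_c(P)-C_{c,k}\bigr)$ for $n\equiv k\bmod D$, $n\gg 0$. A minimum of finitely many affine functions of $n$ is, for $n\gg 0$, attained exactly on
\[
\mathcal O_k=\bigl\{\,c\in T: w_c(P)=W_k\ \text{and}\ C_{c,k}=\widetilde C_k\,\bigr\},\qquad W_k:=\min_{c\in T}w_c(P),\quad \widetilde C_k:=\max\{\,C_{c,k}:c\in T,\ w_c(P)=W_k\,\},
\]
a set depending only on $k$ and $P$, with common value $\aw(nP)=nW_k-\widetilde C_k$. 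Finally, any direction $c'$ attaining $\aw(nP)$ (a priori outside $T$) satisfies $n\,w_{c'}(P)-C_{c',k}=nW_k-\widetilde C_k$ for infinitely many $n$, forcing $w_{c'}(P)=W_k$ and $C_{c',k}=\widetilde C_k$; by (ii) the condition $w_{c'}(P)=W_k$ already confines $c'$ to a finite set, so the per-direction thresholds are uniformly bounded. Hence for all $n\equiv k\bmod D$ with $n\gg 0$ the set of directions attaining $\aw(nP)$ is the set of nonzero integer multiples of the primitive $c'$ with $w_{c'}(P)=W_k$ and $C_{c',k}=\widetilde C_k$ (Proposition~\ref{prop: aw_properties}\eqref{prop: aw_properties; parallel}), which depends only on $n\bmod D$. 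Since $n_1\equiv n_2\bmod D$, the two optimal sets coincide.

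\emph{Main obstacle.} The only nonformal step is the finiteness one: it cannot be read off the formula $\aw_c(nP)=n\,w_c(P)-C_{c,k}$ alone, since $C_{c,k}$ — through the Theorem~\ref{thm: AR structure} gap count — may grow with $\|c\|$, so a direction of large norm could in principle have small arithmetic width. The fix is the cheap but robust telescoped lower bound $\aw_c(nP)\ge(n-D)\,w_c(P)$ together with the coercivity of the lattice width $c\mapsto w_c(P)$; everything else is bookkeeping with Theorem~\ref{thm: eql in fixed directions}, Lemma~\ref{lemma: gap set equality}, and Proposition~\ref{prop: aw_properties}.
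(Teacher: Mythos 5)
Your architecture is genuinely different from the paper's: you reduce to a finite test set of directions and then compare finitely many eventually-affine functions of $n$, residue class by residue class, whereas the paper compares an alleged optimal direction for $n_1P$ against one for $n_2P$ directly via the recurrence $\aw_c((n+D)P)=\aw_c(nP)+\tfrac1{\lambda}D(M_c-m_c)$. Your endgame (min of finitely many affine functions stabilizes to the set minimizing the slope and, among those, maximizing $C_{c,k}$) is correct and in fact handles the direction-dependence of the slopes more transparently than the paper does. But there is a genuine gap exactly at the step you flag as the only nonformal one. The bound $\aw_c(nP)\ge(n-D)\,w_c(P)$ for every primitive $c$ and every $n>D$ is false, because the recurrence of Theorem~\ref{thm: eql in fixed directions} only holds for $n\gg 0$ with a threshold that depends on $c$: it is inherited from Lemma~\ref{lemma: gap set equality} and ultimately from Theorem~\ref{thm: AR structure}, whose constants (e.g.\ $t=c^T(x-Np_m)$) and stabilization dilation $N$ grow with $\|c\|$. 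Telescoping therefore only gives $\aw_c(nP)\ge(n-N_c)\,w_c(P)$ for $n\ge N_c$ with $N_c$ unbounded in $\|c\|$, which says nothing about directions of large norm at a fixed large $n$. Concretely, take $P=[0,1]^2$ (so $D=1$) and $c=(1,q)$ primitive with $q\gg n$: the values $i+qj$ for $0\le i,j\le n$ are all distinct, so $\aw_c(nP)=(n+1)^2$, which is far below $(n-D)\,w_c(P)=(n-1)(q+1)$.

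The finiteness conclusion itself is very likely true, but it needs an argument that is uniform in $c$ rather than one routed through the $c$-dependent asymptotics. For instance, each fiber $\ell_c^{-1}(j)\cap nP\cap\ZZ^d$ lies in an affine lattice hyperplane whose induced lattice has determinant $\|c\|$ (for primitive $c$), so a long $c$ forces each fiber to be small and hence forces many distinct values; this bounds $\|c\|$ for any near-optimal direction at a fixed $n$. Alternatively, you can sidestep finiteness entirely, as the paper does, by comparing just two directions at a time: if $c_*$ is optimal for $n_1P$ and $d_*$ were strictly better for $n_2P$, apply the (eventually valid) affine formulas for $c_*$ and $d_*$ only, and derive a contradiction for $n\gg0$ in the given residue class. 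As written, though, your finiteness step does not go through, and since the rest of the argument (the definition of $T$, the min over $T$, and the stabilization of the argmin) all rest on it, the proof is incomplete without a repair of this step.
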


\begin{proof}
    Without loss of generality, assume $n_1>n_2$ such that $n_1, n_2\gg0$, and fix $k\in\ZZ_{>0}$ such that $n_1=n_2+kD$. Then, applying the recurrence relationship in Equation~\eqref{eql:fixed-directions}, we see 
    \[
    \aw_c(n_2+(k+1)D)P = \aw_c(n_2P)+\tfrac{1}{\lambda}D(k+1)(M-m).
    \]
    As we also have that
    \[
    \aw_c(n_1+D)P = \aw_c(n_1P)+ \tfrac{1}{\lambda}D(M-m),
    \]
    setting these expressions equal gives us that 
    \begin{equation}
    \label{mod dil}
        \aw_c(n_1P)-\aw_c(n_2P)= \tfrac{1}{\lambda}kD(M-m). 
    \end{equation}
    Assume $c_*\in\ZZ^d\setminus\{0\}$ is such that $\aw(n_1P)=\aw_{c_*}(n_1P)$. By way of contradiction, assume $\aw(n_2P)\neq\aw_{c_*}(n_2P)$, so $\aw(n_2P)=\aw_{d_*}(n_2P)<\aw_{c_*}(n_2P)$ for some $d_*\in\ZZ^d\setminus\{0\}$. However, adding $\tfrac{1}{\lambda}kD(M-m)$ to both sides of the inequality and applying Equation~\eqref{mod dil} yields $\aw_{d_*}(n_1P)<\aw_{c_*}(n_1P)$ which is a contradiction. This proves that if $\aw(n_1P)=\aw_{c_*}(n_1P)$, then $\aw(n_2P)=\aw_{c_*}(n_2P)$. 

    Conversely, assume $c_*\in\ZZ^d\setminus\{0\}$ is such that $\aw(n_2P)=\aw_{c_*}(n_2P)$. As $n_2=n_1-kD$, we rewrite Equation~\eqref{eql:fixed-directions} as 
    \[
    \aw_c(n_2P)=\aw_c(n_1P) - \tfrac{1}{\lambda}kD(M-m).
    \]
    By way of contradiction, assume $\aw(n_1P)\neq\aw_{c_*}(n_1P)$. Thus, there is some $d_*\in\ZZ^d\setminus\{0\}$ such that $\aw(n_1P)=\aw_{d_*}(n_1P)<\aw_{c_*}(n_1P)$. However, subtracting $\tfrac{1}{\lambda}kD(M-m)$ from both sides of this inequality yields $\aw_{d_*}(n_2P)<\aw_{c_*}(n_2P)$ which gives us the same contradiction. Therefore, when $n_1\equiv n_2\bmod D$, we have that $\aw(n_1P)$ and $\aw(n_2P)$ are obtained in the same directions for $n_1, n_2\gg 0$. 
\end{proof}

\begin{figure}[t]
    \centering
    \includegraphics[width=.8\linewidth]{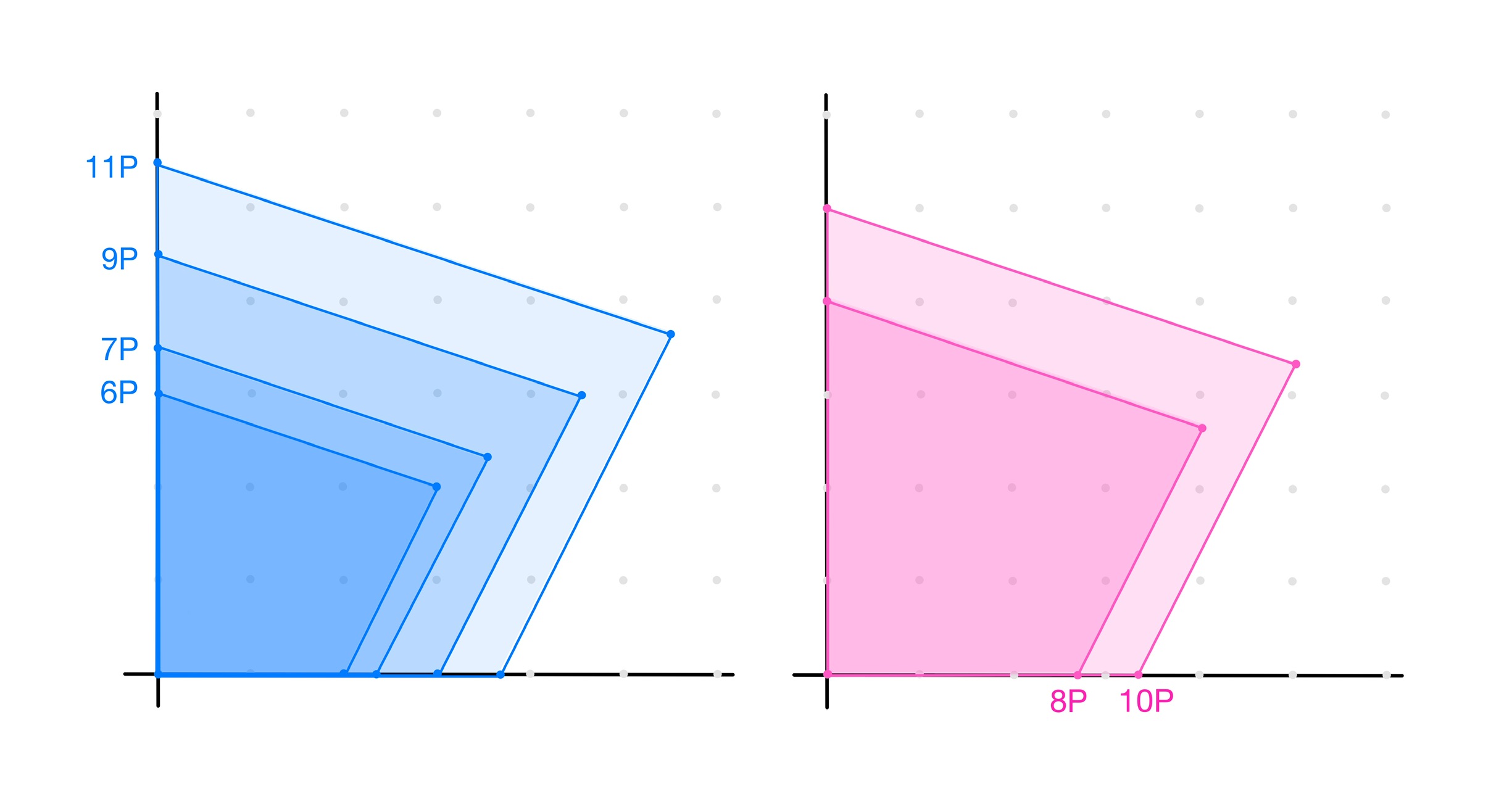}
    \caption{Dilations of the polytope $P$ from Example~\ref{ex: periodic directions}, partitioned based on their optimal directions.}
    \label{fig:periodic_directions}
\end{figure}

\begin{example}
    \label{ex: periodic directions}
    Consider the polytope 
    \[
    P=\convhull\big(\big(0,0),\big(\tfrac{1}{3}, 0\big), \big(0, \tfrac{1}{2}\big), \big(\tfrac{1}{2}, \tfrac{1}{3}\big)\big).
    \]
    The optimal directions are oscillating periodically. In Figure~\ref{fig:periodic_directions}, the polytopes given in the right image have arithmetic width minimized in the same directions. Similarly, the polytopes given in the left image have their arithmetic widths minimized in the same directions. More specifically, let $c_n$ be a direction obtaining the arithmetic width of $nP$. Then,
    \[c_n = \begin{cases} {\color{blue}(1,0) \text{ or } (0,1)} &\text{  for } n\equiv 0,1,3,5 \bmod 6, \\  {\color{magenta}(0,1)} &\text{  for } n\equiv 2,4 \bmod 6 .  
    \end{cases}\]
    As ensured by Lemma~\ref{lemma: optimal mod directions}, when $n$ is restricted to the same residue class modulo 6, the polytopes $nP$ have the same optimal directions. 
    \end{example}

\begin{theorem}
\label{thm: eql_non_fixed_directions}
    Let $P\subset\RR^d$ be a rational polytope. The arithmetic width $aw_P(n)$ is eventually quasilinear function of $n$ for $n\in\ZZ_{\geq 0}$.
\end{theorem}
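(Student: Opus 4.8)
The plan is to deduce Theorem~\ref{thm: eql_non_fixed_directions} from its two predecessors: the fixed-direction quasilinearity of Theorem~\ref{thm: eql in fixed directions} and the periodicity of optimal directions in Lemma~\ref{lemma: optimal mod directions}. The key observation is that $\aw(nP) = \min_{c\in\ZZ^d\setminus\{0\}}\aw_c(nP)$, so once we know the minimizing direction is eventually constant on each residue class modulo $D := \mathrm{denom}(P)$, we may replace the minimum by a single linear functional on that class and quote Theorem~\ref{thm: eql in fixed directions}.

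First I would dispose of degenerate dilates. Writing $\aff(P) = v_0 + W$ with $v_0\in\QQ^d$ a vertex and $W$ a rational linear subspace, we have $\aff(nP) = nv_0 + W$, which contains a lattice point precisely when $n$ lies in a subgroup $e\ZZ\subseteq\ZZ$; moreover $e\mid D$ since $Dv_0\in\ZZ^d$. For $n\notin e\ZZ$ we get $nP\cap\ZZ^d=\emptyset$ and hence $\aw(nP)=0$, which contributes only a harmless periodic string of zeros. Passing from $P$ to $eP$ — which merely rescales the argument $n$ by the constant $e$ and therefore preserves eventual quasilinearity — I may assume $\aff(P)$ contains a lattice point, so that Theorem~\ref{thm: AR structure}, Theorem~\ref{thm: eql in fixed directions}, and Lemma~\ref{lemma: optimal mod directions} apply to all large dilates and $\aw(nP)$ is a positive integer attained by some direction.

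Next, fix a residue $k\in\{0,1,\dots,D-1\}$. Lemma~\ref{lemma: optimal mod directions} provides a threshold $N_k$ and a single direction $c_k\in\ZZ^d\setminus\{0\}$ with $\aw(nP)=\aw_{c_k}(nP)$ for every $n>N_k$ with $n\equiv k\bmod D$. The sharp form of Theorem~\ref{thm: eql in fixed directions}, namely the recurrence~\eqref{eql:fixed-directions}, yields a constant $\delta_k\in\QQ_{\ge 0}$ (depending on $c_k$) with $\aw_{c_k}((n+D)P)=\aw_{c_k}(nP)+\delta_k$ for all large $n$; hence along the progression $n=k+jD$ the sequence $\aw_{c_k}(nP)$ is eventually arithmetic in $j$, i.e.\ $\aw(nP)$ restricted to this residue class is eventually an affine function $a_kn+b_k$ whose coefficients depend only on $k$. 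Assembling the $D$ residue classes gives $\aw(nP)=a_{(n\bmod D)}\,n+b_{(n\bmod D)}$ for all $n\gg 0$, which is exactly agreement with a quasipolynomial of degree at most one and period dividing $D$; that is, $\aw(nP)$ is eventually quasilinear.

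I expect the only real friction to be bookkeeping: checking that Lemma~\ref{lemma: optimal mod directions} genuinely furnishes a single $c_k$ valid for all sufficiently large $n$ in the class (rather than one that drifts with $n$), confirming that the degenerate affine-span reduction is clean, and tracking how the period $D$ survives under the reduction from $P$ to $eP$. All the substantive content already lives in Theorem~\ref{thm: eql in fixed directions} and Lemma~\ref{lemma: optimal mod directions}; this theorem is essentially their assembly.
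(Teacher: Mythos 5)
Your proposal is correct and follows essentially the same route as the paper: invoke Lemma~\ref{lemma: optimal mod directions} to fix a single optimal direction $c_k$ on each residue class modulo $D$, then apply the recurrence~\eqref{eql:fixed-directions} from Theorem~\ref{thm: eql in fixed directions} to get an affine formula on each class. Your extra handling of dilates whose affine span misses the lattice is a welcome piece of bookkeeping that the paper's proof leaves implicit, but it does not change the substance of the argument.
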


\begin{proof}
    Recall, from the proof of Theorem~\ref{thm: eql in fixed directions}, we have that \[aw_c((n+D)P) = aw_c(nP) + \tfrac{1}{\lambda}D(M_c-m_c),\] where $M_c=\max_{x\in P} c^Tx$ and $m_c=\min_{x\in P} c^Tx$. From Lemma~\ref{lemma: optimal mod directions}, it suffices to consider a set $D$ of optimal directions to describe the behavior of $\aw(nP)$. Let $c_i^*$ be an optimal direction for all $t_iP$ where $t_i \equiv i\bmod D$ for $i=0,1,\dots, D-1$. 
    % For $i=D$, let $c_D^*$ be an optimal direction for all $t_iP$ where $t_i \equiv 0\bmod D$. 
    Letting $M_i=M_{c_i^*}$ and $m_i=m_{c_i^*}$ for each $i$ for ease of display, this yields 
    \begin{equation*}
    \aw(nP)=
    \begin{cases}
        \tfrac{1}{\lambda}(M_0-m_0)n + a_0 & \text{ for } n\equiv 0\bmod D,\\
        \tfrac{1}{\lambda}(M_1-m_1)n + a_1 & \text{ for } n\equiv 1\bmod D,\\
        \hspace{10mm}\vdots  & \hspace{10mm}\vdots\\
        \tfrac{1}{\lambda}(M_{D-1}-m_{D-1})n + a_{D-1} & \text{ for } n\equiv (D-1)\bmod D,\\
        % \tfrac{1}{\lambda}(M_D-m_D)n + a_D & \text{ for } n\equiv 0\bmod D,\\
    \end{cases}
    \end{equation*}
    for some $a_0, a_1, \ldots, a_{D-1}$, which proves our claim.   
\end{proof}

\section{Computing arithmetic width:\ algorithms and complexity}

%How do we compute the arithmetic width? Here we use rational generating functions of the lattice points in a polytope. The results that follow assume a fixed dimension $d$, and all stated running times are in the input bit-size of the rational polytope. For rational polytopes, we provide a polynomial time algorithm to compute the arithmetic width in fixed direction and a singly-exponential time algorithm to compute the arithmetic width. In order to prove part (a) of Main Theorem~\ref{mainthm: eql_non_fixed_directions}, we will use an algebraic technique wherein we encode lattice points as exponent vectors of monomials. From there, we will rely on Barvinok's theory for the efficient encoding of the lattice points of polyhedra as short sums of rational functions. For details on the theory and implementations of Barvinok's theorem, see \cite{barvinokpommersheim, MR1992831, DELOERA20041273}. We will now state the key facts needed for our main results. 

How do we compute the arithmetic width? Here, we will use an algebraic technique wherein we encode lattice points as exponent vectors of monomials, the rational generating functions of the lattice points in a polytope. From there, we will rely on Barvinok's theory for the efficient encoding of the lattice points of polyhedra as short sums of rational functions. For details on the theory and implementations of Barvinok's theorem, see \cite{barvinokpommersheim, MR1992831, DELOERA20041273}. The results that follow assume a fixed dimension $d$, and all stated running times are in the input bit-size of the rational polytope. For rational polytopes, we provide a polynomial time algorithm to compute the arithmetic width in fixed direction and a singly-exponential time algorithm to compute the arithmetic width. We will now state the key facts needed for our main results.

Given a rational polyhedron $P$, the associated generating function of its lattice points is the Laurent series 
\[
g_P(x) = \!\!\! \sum_{\alpha\in P\cap\ZZ^d} \!\!\! x^{\alpha}.
\] Barvinok's theory shows that, in fixed dimension $d$, this (generally exponential) sum admits a polynomial-size representation as a sum of rational functions of the form 
\begin{equation} \label{eq:aa}
g_P(x) = \sum_{i\in I} {E_i \frac{x^{u_i}} {\prod\limits_{j=1}^d
(1-x^{v_{ij}})}},
\end{equation}
where $I$ is a polynomial-size indexing set, $E_i\in\{1,-1\}$ and $u_i, v_{ij} \in\ZZ^d$ for all $i$ and $j$. 

The key idea is that Barvinok's compact representation of the lattice points supports efficient computations. For example, substituting $x_i=1$ for all $i\in[d]$ yields the number of lattice points in $P$. More generally, the following lemma shows us that such substitutions can be done in polynomial time. This tool will later be used to compute the arithmetic range and arithmetic width. 

\begin{lemma}[{\cite[Theorem~2.6]{MR1992831}}] \label{monosubs} 
    Let 
    \[
    g(x)= \sum_{i\in I}\alpha_i \frac{x^{u_i}}{\prod_{j=1}^k(1-x^{v_{ij}})},
    \]
    where $u_i,v_{ij}\in\ZZ^d$, $\alpha_i \in \QQ$, and $k$ is fixed. Let $\psi:\CC^n\to\CC^d$ be the monomial map defined by $x_i\mapsto z_1^{l_{i1}}z_2^{l_{i2}}\dots z_n^{l_{in}},$ and assume $\psi(\CC^n)$ is not contained in the pole set of $g(x)$. Then, $g(\psi(z))$ can be computed in polynomial time (in the input bit size) as a short rational generating function of the same form as $g(z)$.
\end{lemma}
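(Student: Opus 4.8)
The plan is to perform the substitution term by term, isolating the terms where it is problematic and resolving them by an auxiliary-variable limit argument. First I would encode the monomial map $\psi$ on the level of exponents: there is a linear map $\phi:\ZZ^d\to\ZZ^n$ (given by the transpose of the matrix $(l_{ij})$) with $\psi(z)^{a}=z^{\phi(a)}$ for every $a\in\ZZ^d$. Formally substituting into each summand turns $\alpha_i\,x^{u_i}/\prod_{j=1}^k(1-x^{v_{ij}})$ into $\alpha_i\,z^{\phi(u_i)}/\prod_{j=1}^k(1-z^{\phi(v_{ij})})$. If $\phi(v_{ij})\neq 0$ for all $j$, this is already a legitimate summand of the required form and nothing more is needed. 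The only obstruction is a factor with $\phi(v_{ij})=0$, which becomes $1-z^{0}=0$; these are exactly the terms whose individual polar sets contain $\psi(\CC^n)$, and by hypothesis their singular contributions must cancel in the sum.

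To handle such terms, introduce a single new variable $\theta$ and a generic integer vector $c\in\ZZ^d$ with $\langle c,v_{ij}\rangle\neq 0$ for all $i,j$ (possible and computable in polynomial time, since there are only polynomially many nonzero vectors $v_{ij}$ to avoid, the $v_{ij}$ being nonzero in any Barvinok representation). Substituting $x_i\mapsto \psi(z)_i\,\theta^{c_i}$ sends $x^{v_{ij}}$ to $z^{\phi(v_{ij})}\theta^{\langle c,v_{ij}\rangle}$; now no denominator factor is identically zero, so we obtain an honest short rational generating function $G(z,\theta)$ in $n+1$ variables with the same number of denominator factors, computable in polynomial time. Since $\theta^{c}|_{\theta=1}=\mathbf 1$, we have $g(\psi(z))=\lim_{\theta\to1}G(z,\theta)$; the hypothesis that $\psi(\CC^n)$ is not contained in the polar set of $g$ guarantees that $g$ is regular at $\psi(z)$ for $z$ in a dense open set, so this limit exists and equals $g(\psi(z))$ as a rational function of $z$.

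It remains to compute $\lim_{\theta\to1}G(z,\theta)$ as a short rational function, and here the fixed bound $k$ on the number of denominator factors is essential. In each summand of $G$, the factors that vanish at $\theta=1$ are precisely the $1-\theta^{\langle c,v_{ij}\rangle}$ coming from the originally bad factors; there are at most $k$ of them, so the pole order at $\theta=1$ is at most $k$. Writing $1-\theta^{b}=-b(\theta-1)(1+O(\theta-1))$ for each such factor and Taylor-expanding the remaining (regular, nonvanishing) part of the summand in powers of $\theta-1$ up to order $k$, one extracts the coefficient of $(\theta-1)^{0}$; the principal parts cancel across summands by the regularity of $g$, so summing these finite parts yields $g(\psi(z))$. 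Each Taylor coefficient is obtained by differentiating a rational function of the allowed form at most $k$ times and evaluating at $\theta=1$, which keeps everything rational in $z$ with denominators that are products of the $1-z^{\phi(v_{ij})}$ (to bounded powers) and numerators that expand into polynomially many monomials; all of this is polynomial time in fixed dimension. I expect the main obstacle to be the bookkeeping in this last step — verifying that the limit computation stays polynomial in size and produces a function of the stated form — together with the (standard but delicate) argument that the polar-set hypothesis legitimizes passing from the formal term-by-term limit to the genuine value $g(\psi(z))$.
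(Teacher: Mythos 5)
The paper does not prove this lemma: it is imported verbatim as Theorem~2.6 of Barvinok--Woods \cite{MR1992831}, so there is no internal proof to compare against. Your sketch is, in substance, a correct reconstruction of the Barvinok--Woods argument itself: term-by-term substitution on exponents via the linear map $\phi$, a generic auxiliary direction $c$ with $\langle c,v_{ij}\rangle\neq 0$ to regularize the denominator factors with $\phi(v_{ij})=0$, and extraction of the constant term of the Laurent expansion at $\theta=1$ (pole order at most $k$, principal parts cancelling by regularity of $g\circ\psi$). The only point worth tightening is that a ``good'' factor $1-z^{\phi(v_{ij})}\theta^{\langle c,v_{ij}\rangle}$ can also vanish at $\theta=1$ for special values of $z$, so the identification of the finite part must be made as an identity of rational functions in $z$ (valid on a dense open set), which you do gesture at; with that understood, the argument is sound.
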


Having obtained a compact rational generating function for the lattice points of $P$, the next step is to compute the arithmetic width in a fixed direction is a projection step. This is achieved using the Projection Theorem of Barvinok and Woods~\cite{MR1992831}. 
   
\begin{lemma}[{\cite[Theorem~1.7]{MR1992831}}] \label{project}       
    Fix the dimension $d$. Let $P\subset\RR^d$ be a rational polytope and let $T:\ZZ^d\to\ZZ^k$ be a linear map. Then, there is a polynomial-time algorithm that computes a short rational generating function for $f\bigl(T(P\cap\ZZ^d);z\bigr)$. 
\end{lemma}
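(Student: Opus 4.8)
Since this is the Barvinok--Woods projection theorem (cited as~\cite[Theorem~1.7]{MR1992831}), the plan is to recall the architecture of their proof, which rests on the two tools already in hand: Barvinok's decomposition~\eqref{eq:aa} of $g_P$ into a short sum of simple rational fractions, and the monomial-substitution Lemma~\ref{monosubs}. First apply Barvinok's theorem to compute, in polynomial time, a short rational generating function $g(x)=f(P\cap\ZZ^d;x)$. Substituting $x_\ell\mapsto z^{Te_\ell}$ (the $\ell$-th column of $T$) into $g$ and invoking Lemma~\ref{monosubs} yields, still in short form, the series $\sum_{\alpha\in P\cap\ZZ^d}z^{T\alpha}$. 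The only obstruction to being finished is that this records each point of $T(P\cap\ZZ^d)$ with multiplicity equal to the number of lattice points in its fibre, and these multiplicities must be collapsed to~$1$.

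\textbf{Reducing the linear map.} A unimodular change of basis of $\ZZ^d$ adapted to the (saturated) sublattice $\ker(T)\cap\ZZ^d$ --- absorbed into $g$ by a monomial substitution, permitted by Lemma~\ref{monosubs} --- together with a unimodular change of basis on the target reduces to the case in which $T$ is the coordinate projection $\pi\colon\ZZ^d\to\ZZ^{r}$ forgetting the last $d-r$ coordinates, post-composed with an \emph{injective} integral linear map, where $r=\rank(T)$. An injective linear map introduces no new multiplicities and is itself a monomial substitution, so it suffices to produce a short generating function for $\pi(S)$, where $S=P\cap\ZZ^d$.

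\textbf{Collapsing multiplicities.} The idea is to select one canonical lattice point from each fibre of $\pi$ over $S$ and to extract the resulting set --- which $\pi$ maps bijectively onto $\pi(S)$ --- using the Boolean operations on short generating functions from~\cite{MR1992831}. Concretely, the fibres of $\pi$ over $S$ are the lattice points of bounded rational slices of $P$, hence finite, so each has a unique element that is lexicographically smallest in the last $d-r$ coordinates; the set $S^\flat$ of these fibre-minima is cut out of $S$ by deleting every point that is strictly lex-larger, in the last $d-r$ coordinates, than another point of the same fibre, which is a Boolean and Minkowski-sum combination of short generating functions. Here complement and set difference reduce to intersection via the indicator identity $\mathbf 1_{S\setminus U}=\mathbf 1_{S}-\mathbf 1_{S\cap U}$, so the Intersection Lemma of~\cite{MR1992831} produces a short generating function for $S^\flat$ in polynomial time, and then $f(\pi(S);w)=f(S^\flat;w_1,\dots,w_r,1,\dots,1)$ by Lemma~\ref{monosubs} --- perturbing $x$ by a generic unimodular translate should the specialization meet the apparent pole locus, which is harmless since $f(S^\flat;x)$ is genuinely a polynomial. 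Composing with the substitution from the reduction step completes the algorithm.

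\textbf{Main obstacle.} The real content, and the step that forces the fixed-dimension hypothesis, is the Intersection Lemma: the Hadamard product of two generating functions of the form~\eqref{eq:aa} is again of that form and computable in polynomial time when $d$ is fixed. Fixing $d$ bounds the number of binomial factors $1-x^{v_{ij}}$ in each summand, so the Hadamard product of two simple fractions with at most $d$ poles apiece can be evaluated by a residue / partial-fraction calculation in a bounded number of auxiliary variables. By contrast, the unimodular bookkeeping, the choice of fibre representatives, and the final specialization $w_i=1$ are routine once Barvinok's decomposition and Lemma~\ref{monosubs} are available.
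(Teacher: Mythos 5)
The paper does not actually prove this lemma: it is quoted, with citation, as Theorem~1.7 of Barvinok--Woods \cite{MR1992831}, so there is no internal argument to compare yours against. Judged on its own terms, your reconstruction gets the architecture right --- monomial substitution into Barvinok's short form of $g_P$ yields $\sum_{\alpha\in P\cap\ZZ^d}z^{T\alpha}$, i.e.\ the image counted with fibre multiplicities, and the task is to select one representative per fibre using set operations on short generating functions. But there is a genuine gap at exactly the step you wave through. With $S=P\cap\ZZ^d$, the set of points that are \emph{not} lex-minimal in their fibre is $S\cap(S+Y)$, where $Y$ is the \emph{infinite} set of lattice vectors in $\ker T$ that are lexicographically positive in the last $d-r$ coordinates. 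Calling this ``a Boolean and Minkowski-sum combination of short generating functions'' presumes that Minkowski sums of sets presented by short rational generating functions are computable in polynomial time. They are not: only the Boolean operations (via Hadamard products) are known to be polynomial-time in fixed dimension, and projection and Minkowski sum are precisely the operations that fail for general SRGF-presented sets. The heart of the Barvinok--Woods proof is a separate structural theorem showing that, for $S$ the lattice points of a \emph{polytope}, the infinitely many translates $S+u$, $u\in Y$, can be replaced by a computable, polynomial-size finite family; that reduction rests on the flatness theorem and is where the fixed-dimension hypothesis does its heaviest lifting --- not, as your closing paragraph asserts, (only) in the Hadamard-product computation, which is the comparatively routine ingredient.

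Two smaller points. First, you cannot sidestep the difficulty by projecting one coordinate at a time: for a single forgotten coordinate the fibres of $P\cap\ZZ^d$ are integer intervals, so $S\setminus\bigl(S\cap(S+e_d)\bigr)$ does extract the minima, but after one projection the image is no longer the set of lattice points of a convex body, its fibres under the next projection need not be intervals, and the trick breaks. Second, your reduction of $T$ to ``coordinate projection followed by an injective map'' is fine, but note that the injective map must still be realized on the level of generating functions by a substitution that avoids cancellation of distinct monomials; this is harmless but should be said. In short: correct skeleton, but the multiplicity-collapsing step as written is not justified by the tools you invoke, and it is the actual content of the theorem.
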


Having gathered the appropriate machinery, we are now prepared to prove that $\aw_c(P)$ can be computed in polynomial time when the dimension is fixed. 

\begin{theorem}
\label{thm: computing aw in fixed directions}
    Let $P\subset\RR^d$ be a rational polytope and let $L$ be the input bit-size of $P$. In fixed dimension, given a nonzero integer vector $c\in\ZZ^d\setminus\{0\}$, there is an algorithm to compute $\aw_c(P)$ that is polynomial time in~$L$ and the input bit-size of $c$.
\end{theorem}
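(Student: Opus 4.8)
The plan is to reduce the computation of $\aw_c(P)=|\AR_c(P)|$ to three polynomial-time operations on short rational generating functions. First, apply Barvinok's algorithm to produce, in time polynomial in $L$ for fixed dimension $d$, a short rational generating function $g_P(x)=\sum_{\alpha\in P\cap\ZZ^d}x^\alpha$ in the form \eqref{eq:aa}. Second, consider the linear map $T:\ZZ^d\to\ZZ$ given by $\alpha\mapsto c^Tx$; by the Projection Theorem of Barvinok and Woods (Lemma~\ref{project} with $k=1$) we obtain a short rational generating function $f\bigl(T(P\cap\ZZ^d);z\bigr)$ for the image \emph{set} $T(P\cap\ZZ^d)=\AR_c(P)\subseteq\ZZ$, that is, $f(z)=\sum_{n\in\AR_c(P)}z^n$, in time polynomial in $L$ and in the bit-size of the map $T$, which is $O(L+\langle c\rangle)$ where $\langle c\rangle$ denotes the bit-size of $c$. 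Third, since $\AR_c(P)$ is a finite set, $f$ is in fact a Laurent polynomial, and $\aw_c(P)=|\AR_c(P)|=f(1)$; evaluating a short rational generating function at the all-ones point is exactly the specialization underlying Barvinok's lattice-point counting (cf.\ the discussion preceding Lemma~\ref{monosubs}, where setting $x_i=1$ returns $|P\cap\ZZ^d|$), and is polynomial time.

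The one point that needs care is this final evaluation $f(1)$: each individual summand $E_iz^{u_i}/\prod_j(1-z^{v_{ij}})$ in the short form of $f$ typically has a pole at $z=1$, even though the sum is regular there. This is handled in the standard way: substitute $z=1+s$ and expand each summand as a Laurent series in $s$ up to the constant term. Each summand contributes a pole of order at most the bounded number of denominator factors; on summing, the principal parts cancel because $f$ is regular at $z=1$, and the sum of the constant terms equals $f(1)$. Only polynomially many summands occur, each exponent has polynomial bit-size, and the truncations are to bounded order, so this is a polynomial-time computation; alternatively one may invoke Lemma~\ref{monosubs} after composing with a generic one-parameter monomial substitution that moves the point of evaluation off the pole set and then pass to the limit as in \cite{barvinokpommersheim,MR1992831}.

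I expect the main obstacle, or at least the step requiring the most care, to be verifying that the Projection Theorem genuinely yields the generating function of the \emph{set} $\AR_c(P)$ with every attained value appearing with coefficient exactly $1$ (as opposed to the multiset generating function $\sum_{\alpha}z^{c^T\alpha}$, whose coefficients count lattice points slice-by-slice and from which $|\AR_c(P)|$ is not directly recoverable), and that its running time is polynomial in $\langle c\rangle$ rather than merely polynomial in $L$ with $c$ treated as a fixed constant; both are consequences of Lemma~\ref{project} but should be made explicit. Finally, note that $c$ need not be assumed primitive: if $c=\mu c_0$ then $\AR_c(P)=\mu\AR_{c_0}(P)$, so $|\AR_c(P)|=\aw_c(P)$ is unchanged, and the algorithm above applies verbatim to $c$.
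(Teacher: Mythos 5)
Your proposal is correct and follows essentially the same route as the paper: Barvinok's algorithm for $g_P$, the Barvinok--Woods Projection Theorem (Lemma~\ref{project}) applied to $\ell_c$ to get the generating function of the image set $\AR_c(P)$, and specialization at $1$ to count. You in fact handle the one delicate point --- that one needs the \emph{set} generating function of $T(P\cap\ZZ^d)$ with all coefficients equal to $1$, not the multiset specialization $\sum_\alpha z^{c^T\alpha}$ --- more explicitly than the paper, whose proof interposes a monomial substitution via Lemma~\ref{monosubs} before invoking the projection step.
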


\begin{proof}
    By Barvinok's algorithm, the generating function $g_P(x)=\sum_{\alpha\in P\cap\ZZ^d}x^{\alpha}$ can be computed in polynomial time and expressed in the short rational form of Equation~\eqref{eq:aa}. Applying Lemma~\ref{monosubs} to the linear map $\ell_c:\ZZ^d\to\ZZ$ defined $\ell_c(x)=c^Tx$, we obtain a univariate rational generating function $h(t)$ whose monomials correspond bijectively to $\AR_c(P)$. Note that $h$ is computed in compact rational form, not as an explicit list of monomials. Finally, the arithmetic width in direction $c$ is the number of distinct monomials in $h$. By Lemma~\ref{project}, this quantity can be determined in polynomial-time. Evaluating $h(t)$ at $t=1$ yields precisely the desired count. 
\end{proof}

In order to extend the results of Theorem~\ref{thm: computing aw in fixed directions} to the minimum arithmetic width, we first exhibit a finite set of test directions that it suffices to check to compute the arithmetic width. 

\begin{lemma}
\label{lemma: sufficent test directions}
     Let $P\subset\RR^d$ be a rational polytope and fix a nonzero integer vector $c\in\ZZ^d\setminus\{0\}$. There is a finite set of directions $T$ that serve as a sufficient test set to compute $\aw(P)$.
\end{lemma}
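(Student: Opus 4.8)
The plan is to reduce the minimization over all directions to a finite combinatorial classification of directions according to how the functional $\ell_c$ collapses the finite set $S = P\cap\ZZ^d$ of lattice points of $P$. If $S=\nothing$ then $\aw_c(P)=0$ for every $c$ and any single nonzero vector works, so assume $S\neq\nothing$ and set $\Delta=\{x-y : x,y\in S\}\subseteq\ZZ^d$, a finite set of difference vectors. The first step is the observation that $\aw_c(P)=|\AR_c(P)|$ is the number of distinct values of $x\mapsto c^Tx$ on $S$, and — because $\ell_c$ is linear — this count depends only on which pairs it identifies, i.e.\ only on the subset $\Delta\cap c^\perp=\{w\in\Delta : c^Tw=0\}$. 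Concretely, $x$ and $y$ lie on the same hyperplane of $h_c(P)$ precisely when $x-y\in\Delta\cap c^\perp$, so any two $c,c'\in\ZZ^d\setminus\{0\}$ with $\Delta\cap c^\perp=\Delta\cap (c')^\perp$ satisfy $\aw_c(P)=\aw_{c'}(P)$.

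Next I would note that $\Delta\cap c^\perp$ takes only finitely many values as $c$ ranges over $\ZZ^d\setminus\{0\}$: it is a subset of the finite set $\Delta$, and more precisely it records the zero-pattern of $c$ on the central arrangement $\mathcal A=\{\,w^\perp : w\in\Delta\,\}$ in the space of directions, whose number of faces is $O(|\Delta|^{d})$. For each subset $W\subseteq\Delta$ that is actually realized by some nonzero $c$, I would choose one integer representative $c_W\in\ZZ^d\setminus\{0\}$ with $\Delta\cap c_W^\perp=W$; the admissible set for such a choice is $(\operatorname{span} W)^\perp\setminus\bigcup_{w\in\Delta\setminus W}w^\perp$, which is nonempty (the original $c$ witnesses it) and is therefore a positive-dimensional rational subspace with finitely many proper rational subspaces removed. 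Taking $T=\{c_W\}$ over all realized $W$ gives a finite set of nonzero integer directions depending only on $P$, and since $|S|=|P\cap\ZZ^d|$ is at most singly-exponential in the bit-size $L$ of $P$, so are $|\Delta|$ and hence $|T|$, as needed for the single-exponential algorithm in Main Theorem~\ref{mainthm:awu-polytime}(b).

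Finally, for any $c\in\ZZ^d\setminus\{0\}$, taking $W=\Delta\cap c^\perp$ gives $c_W\in T$ with $\aw_c(P)=\aw_{c_W}(P)$ by the first step, so $\{\aw_c(P) : c\in\ZZ^d\setminus\{0\}\}=\{\aw_t(P) : t\in T\}$; in particular $\aw(P)=\min_{t\in T}\aw_t(P)$, and this minimum is attained at some $t\in T$, so $T$ is a sufficient test set (the hypothesis fixing a nonzero $c$ is not used). The one genuinely nontrivial point I expect to dwell on is the existence of an \emph{integer} representative in each realized class: this rests on the fact that a rational subspace of positive dimension is never a finite union of proper subspaces and carries a full-rank sublattice of $\ZZ^d$, so it contains an integer point avoiding any prescribed finite union of proper rational subspaces — scaling a rational witness by the lcm of its denominators makes this explicit. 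Everything else is bookkeeping, and combined with Theorem~\ref{thm: computing aw in fixed directions} — compute $\aw_t(P)$ for each $t\in T$ and take the minimum — this also yields the claimed single-exponential algorithm for $\aw(P)$.
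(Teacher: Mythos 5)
Your proof is correct, and it shares the paper's core observation: $\aw_c(P)$ is determined by which differences of lattice points of $P$ the direction $c$ annihilates, so one only needs finitely many directions. Where you diverge is in how the finite test set $T$ is extracted. The paper enumerates all linearly independent $(d-1)$-subsets of $N=(P-P)\cap\ZZ^d$ and takes a primitive integer vector in the kernel of each resulting $(d-1)\times d$ matrix; a direction in $T$ is then at least as good as any given optimal direction because it annihilates a superset of the relevant differences. You instead classify directions by their exact zero-pattern $W=\Delta\cap c^\perp$ on the central arrangement $\{w^\perp: w\in\Delta\}$ and pick one integer representative per realized pattern. Your construction proves something slightly stronger -- the full value set $\{\aw_c(P):c\in\ZZ^d\setminus\{0\}\}$ equals $\{\aw_t(P):t\in T\}$, not merely that the minimum is attained -- and it handles cleanly the degenerate cases the paper's sketch glosses over (e.g., when the optimal direction is orthogonal to a set of differences spanning fewer than $d-1$ dimensions, or when $N$ contains no $d-1$ linearly independent vectors, in which case the paper's enumeration in step~2 is empty). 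Note also that your worry about producing an integer representative in each realized class is even easier than you make it: any $c$ realizing the class is itself an integer witness, so the density argument is only needed for the algorithmic version where one enumerates cells of the arrangement without a witness in hand. Both constructions yield a $T$ of singly-exponential size in $L$ for fixed $d$, as required for Main Theorem~\ref{mainthm:awu-polytime}(b).
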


\begin{proof}
    Without loss of generality, we may assume that $P$ contains at least two integer points and that $\dim(P)=d$. If $c$ is a direction that minimizes the arithmetic width, then there are at least two distinct lattice points $x,y\in K\cap\ZZ^d$ such that $\ell_c(x)=\ell_c(y)$, i.e., $c^T(x-y)=0$. Thus, it suffices to consider directions that collapse differences of lattice points in $K$. Thus, to determine $T$, it suffices to consider directions $c$ that are orthogonal to nonzero differences of lattice points in $K$. We may use this to determine $T$ as follows:
    \begin{enumerate}[1.]
        \item Compute $N=(P-P)\cap(\ZZ^d)$. For rational polytopes, $|N|$ can be computed in polynomial time in fixed dimension using methods such as Barkinov's algorithm. However, the number of lattice point in $P$, and thus $N$, can be exponentially large in the bit-length $L$ of the input data, even for fixed $d$. 
        \item Enumerate all linearly independent subsets of $N$ of size $d-1$. For each such subset $T'=\{v_1,\dots, v_{d-1}\}$, form the matrix $V=(v_1|\dots|v_{d-1})$.
        \item For each such $V$, compute a primitive integer vector $c\in\ker(V^T)$. Add each $c$ to $T$. \qedhere
    \end{enumerate}
\end{proof}

\begin{theorem}\label{thm: computing aw}
    Let $P\subset\RR^d$ be a rational polytope with input bit-size $L$. In fixed dimension, given a nonzero integer vector $c\in\ZZ^d\setminus\{0\}$, there is a single-exponential time algorithm in $L$ to compute the arithmetic width of $P$.
\end{theorem}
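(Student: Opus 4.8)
The plan is to combine the existence of a finite test set of directions from Lemma~\ref{lemma: sufficent test directions} with the polynomial-time algorithm for a fixed direction from Theorem~\ref{thm: computing aw in fixed directions}, and then to check that ranging over the entire test set contributes only a single-exponential overhead. (The statement as written refers to ``a nonzero integer vector $c$'', but the quantity being computed is the minimum arithmetic width $\aw(P)=\min_{c\neq 0}\aw_c(P)$ of Main Theorem~\ref{mainthm:awu-polytime}(b); the argument below is for that invariant.)

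First I would invoke Lemma~\ref{lemma: sufficent test directions} to produce the test set $T$: every direction in $T$ arises as a primitive generator of $\ker(V^T)$, where $V$ is a $(d-1)\times d$ integer matrix whose columns form a linearly independent subset of $N=(P-P)\cap\ZZ^d$. Two bookkeeping estimates are then needed. First, the cardinality of $T$ is at most $\binom{|N|}{d-1}\le |N|^{d-1}$; by standard Ehrhart-type bounds in fixed dimension, $|P\cap\ZZ^d|$, and hence $|N|$, is single-exponential in $L$, so (since $d$ is fixed) $|T|$ is single-exponential in $L$ as well. Second, each column of $V$ has bit-size $O(L)$, so by Cramer's rule every maximal minor of $V$ has bit-size polynomial in $L$, whence the primitive vector $c\in\ker(V^T)$ has bit-size polynomial in $L$.

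Next, for each $c\in T$ I would run the algorithm of Theorem~\ref{thm: computing aw in fixed directions} to compute $\aw_c(P)$; by that theorem its running time is polynomial in $L$ and in the bit-size of $c$, hence polynomial in $L$ by the second estimate above. Finally, output $\aw(P)=\min_{c\in T}\aw_c(P)$, which is correct precisely because $T$ is a sufficient test set. The total running time is $|T|$ times a polynomial in $L$, i.e.\ single-exponential in $L$, as claimed.

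The main obstacle is the enumeration of $T$ itself: although $|N|$ can be \emph{counted} in polynomial time via Barvinok's algorithm, the set $N$ may contain single-exponentially many points, so actually \emph{listing} its elements in order to form the matrices $V$ is the dominant cost. One has to observe that in fixed dimension the lattice points of a rational polytope can be enumerated in time polynomial in their number, e.g.\ by extracting them from the short rational generating function of $P-P$, so this enumeration — and hence the whole algorithm — remains single-exponential in $L$. A secondary point to verify is that no direction placed in $T$ is the zero vector and that parallel directions may be safely deduplicated; both follow from working with primitive kernel generators together with Proposition~\ref{prop: aw_properties}\eqref{prop: aw_properties; parallel}.
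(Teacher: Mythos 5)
Your proposal follows exactly the paper's route: use Lemma~\ref{lemma: sufficent test directions} to produce the finite test set $T$ and run the fixed-direction algorithm of Theorem~\ref{thm: computing aw in fixed directions} on each element, taking the minimum. The paper's own proof is a one-line version of this; your additional bookkeeping (the bound $|T|\le |N|^{d-1}$, the polynomial bit-size of the kernel generators, and the observation that $N$ must actually be enumerated rather than merely counted) correctly fills in the complexity details that the paper leaves implicit.
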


\begin{proof}
    Using Lemma~\ref{lemma: sufficent test directions} to compute $T$ and then apply the algorithm from Theorem~\ref{thm: computing aw in fixed directions} to each $t\in T$ to determine the minimizing direction and value. 
\end{proof}

With this, we have proven our third main result.
\begin{proof}[Proof of Main Theorem~\ref{mainthm:awu-polytime}]
Theorems~\ref{thm: computing aw in fixed directions} and~\ref{thm: computing aw} and Lemma~\ref{lemma: sufficent test directions} prove this claim. 
\end{proof}

\section{Acknowledgements} We are grateful to János Pach, Imre Bárány, Daniel Dadush for fruitful discussions, and to Bruce Reznick for providing important references. This research was partially supported by NSF grants 2348578 and 2434665.

% \fromb{Overall things to check at the end!
% \begin{itemize}
%     \item notation consistency for linear functional 
%     \item figures 'read' the way I want in black and white
%     \item add key words
%     \item eliminate contractions 
% \end{itemize}}  
%\nocite{*}
\bibliographystyle{plain}
%\bibliography{Arithmetic Width Project/aw_draft}
\bibliography{aw}

%%%%%%%%%%%%%%%%%%%%%%%%%%%%%%%%%%%%%%%%%%%%%%%%%%%%%%%%%%%%%%%%%%%%%%%%%%%%%%
%%%%%%%%%%%%%%%%%%%%%%%%%%%%%%%%%%%%%%%%%%%%%%%%%%%%%%%%%%%%%%%%%%%%%%%%%%%%%%
%%%%%%%%%%%%%%%%%%%%%%%%%%%%%%%%%%%%%%%%%%%%%%%%%%%%%%%%%%%%%%%%%%%%%%%%%%%%%%

\end{document}